\numberwithin{equation}{section}
\theoremstyle{plain}
\newtheorem{theorem}{Theorem}[subsection] 
\newtheorem{lemma}[theorem]{Lemma}
\newtheorem{corollary}[theorem]{Corollary}
\newtheorem{proposition}[theorem]{Proposition}
\theoremstyle{definition}
\newtheorem{definition}[theorem]{Definition}
\theoremstyle{remark}
\newtheorem{remark}[theorem]{Remark}
\newtheorem{case[theorem]}{Case}
\title{\parbox{14cm}{\centering{Geometric configurations in the ring of integers modulo $p^l$}}}
\author{David Covert, Alex Iosevich, and Jonathan Pakianathan}
\date{\today}          
\begin{document}
\maketitle

\begin{abstract} We study variants of the Erd\H os distance problem and the dot products problem 
in the setting of the integers modulo $q$, where $q = p^{\ell}$ is a power of an odd prime.
\end{abstract}

\tableofcontents

\section{Introduction}

\subsection{Distance sets} The classical Erd\H os distance problem asks for the minimal number of distinct distances determined by a finite point set in ${\mathbb R}^d$, $d \ge 2$. The continuous analog of this problem, called the Falconer distance problem, asks for the optimal threshold $s_0 > 0$ such that if the Hausdorff dimension of a compact subset of ${\mathbb R}^d$, $d \ge 2$, is greater than $s_0$, then the set of distances determined by the subset has positive Lebesgue measure. It is conjectured that a set of $N$ points in ${\mathbb R}^d$, $d \ge 2$, determines $ \gtrapprox N^{\frac{2}{d}}$ distances and, similarly, that a subset of ${\mathbb R}^d$, $d \ge 2$, of Hausdorff dimension greater than $\tfrac{d}{2}$ determines a set of distances of positive Lebesgue measure. Here, and throughout, $X \lessapprox Y$ means that for every $\epsilon>0$ there exists $C_{\epsilon}>0$ such that $X \leq C_{\epsilon}N^{\epsilon}Y$. Similarly, $X \lesssim Y$ means that there exists $C>0$ such that $X \leq CY$.  Finally, $X \gg Y$ (or equivalently, $Y \ll X$) means that $Y = o(X)$.

The Erd\H os distance problem in the Euclidean plane has recently been solved by Guth and Katz (\cite{GK10}) in two dimensions.  They show that a set of $N$ points in ${\mathbb R}^2$ has at least $c \frac{N}{\log N}$ distinct distances. For the latest developments on the Erd\H os distance problem in higher dimensions, see \cite{KT04}, \cite{SV05}, and the references contained therein.  See \cite{Erdogan05} and the references contained therein for the best known exponents for the Falconer distance problem. 

In vector spaces over finite fields, one may define for $E \subset {\mathbb F}_q^d$, 
$$ \Delta(E)=\{||x-y||: x,y \in E\},$$ where 
$$ \|x-y\|={(x_1-y_1)}^2+\dots+{(x_d-y_d)}^2,$$ and one may again ask for the smallest possible size of $\Delta(E)$ in terms of the size of $E$. While $||\cdot||$ is not a distance in the sense of metric spaces, it is still a rigid invariant in the sense that if $||x-y||=||x'-y'||$, there exists $\tau \in {\mathbb F}_q^d$ and $O \in O_d({\mathbb F}_q)$, the group of orthogonal matrices, such that $x'=Ox+\tau$ and $y'=Oy+\tau$. 

There are several issues to contend with here. First, $E$ may be the whole vector space, which would result in the rather small size for the distance set: 
$$ |\Delta(E)|={|E|}^{\frac{1}{d}}.$$ 

Another compelling consideration is that if $q$ is a prime congruent to $1\pmod{4}$, then there exists $i \in {\mathbb F}_q$ such that $i^2=-1$. This allows us to construct a set in ${\mathbb F}_q^2,$
$$ Z=\{(t,it): t \in {\mathbb F}_q\}$$ and one can readily check that 
$$ \Delta(Z)=\{0\}.$$  

The first  non-trivial result on the Erd\H os-Falconer distance problem in vector spaces over finite fields is proved by Bourgain, Katz and Tao in \cite{BKT04}. The authors get around the first mentioned obstruction by assuming that $|E| \lesssim q^{2-\epsilon}$ for some $\epsilon>0$. They get around the second mentioned obstruction by mandating that $q$ is a prime $\equiv 3\pmod{4}$. As a result they prove that 
$$ |\Delta(E)| \gtrsim {|E|}^{\frac{1}{2}+\delta},$$ where $\delta$ is a function of $\epsilon$. 

In \cite{IR07} the second listed author along with M. Rudnev went after a distance set result for general fields in arbitrary dimension with explicit exponents. In order to deal with the obstructions outlined above, they reformulated the question in analogy with the Falconer distance problem: how large does $E \subset {\mathbb F}_q^d$, $d \ge 2$, need to be to ensure that $\Delta(E)$ contains a positive proportion of the elements of ${\mathbb F}_q$. They proved that if $|E| \ge 2q^{\frac{d+1}{2}}$, then $\Delta(E)={\mathbb F}_q$ directly in line with Falconer's result (\cite{Falc86}) in the Euclidean setting  that for a set $E$ with Hausdorff dimension greater than $\frac{d+1}{2}$ the distance set is of positive measure.   At first, it seemed reasonable that the exponent $\tfrac{d+1}{2}$ may be improvable, in line with the Falconer distance conjecture described above. In \cite{HIKR}, it was shown that the exponent $\tfrac{d+1}{2}$ is best possible in {\bf odd dimensions}, at least for general finite fields. In even dimensions it is still possible that the correct exponent is $\tfrac{d}{2}$, in analogy with the Euclidean case.  In \cite{CEHIK11}, the authors take a first step in this direction by showing that if $|E| \subset {\mathbb F}_q^2$ satisfies $|E| \ge  q^{\frac{4}{3}}$, then $|\Delta(E)| \ge cq$. This is in line with Wolff's result for the Falconer conjecture in the plane which says that the Lebesgue measure of the set of distances determined by a subset of the plane of Hausdorff dimension greater than $\frac{4}{3}$ is positive. 

\subsection{Sums and Products} Let  ${\mathbb F}_q$ denote a finite field with $q$ elements, where
$q$, a power of an odd prime, is viewed as an asymptotic parameter. In
a special case when $q=p$ is a prime,  we use the notation
${\mathbb Z}_p$. Let ${\mathbb F}_q^*$ denote the multiplicative group
of ${\mathbb F}_q$. How large does $A \subset {\mathbb F}_q$ need to be
to make sure that
\[
 dA^2=\underbrace{A^2+\dots+A^2}_{d \mbox{ {\small times}
}}\supseteq{\mathbb F}^*_q?
\]

Here,
\[ 
A^2=A\cdot A= \{a \cdot a': a,a' \in A\}\;\mbox{ and }\;2A=A+A=\{a +
a': a,a' \in A\}.
\]

It was proved in \cite{Bour08} that if $d=3$ and $q$ is prime,
this conclusion holds if the number of elements $|A| \ge
Cq^{\frac{3}{4}}$, with a sufficiently large constant $C>0$. It is
reasonable to conjecture that if $|A| \ge
C_{\epsilon}q^{\frac{1}{2}+\epsilon}$, then $2A^2\supseteq{\mathbb
F}_q^*$. This
result cannot hold, especially in the setting of general finite fields
if $|A|=\sqrt{q}$ because $A$ may in fact be a subfield. See also
\cite{BGK06}, \cite{C04}, \cite{G06}, \cite{Garaev09}, \cite{HIS07},
\cite{KS07}, \cite{TV06}, \cite{V07} and the references contained
therein on recent progress related to this problem and its analogs.
For example, Glibichuk, \cite{G06}, proved that
$$ 8A \cdot B={\mathbb Z}_p, $$ $p$ prime, provided that $|A||B|>p$ and
either $A=-A$ or $A \cap (-A)=\emptyset$. Glibichuk and Konyagin,
\cite{GK06}, proved that if $A$ is subgroup of ${\mathbb Z}_p^{*}$,
and
$|A|>p^{\delta}$, $\delta>0$, then
$$ NA={\mathbb Z}_p$$ with
$$ N \ge C4^{\frac{1}{\delta}}.$$ The above-mentioned results were
achieved by methods of arithmetic combinatorics.

In \cite{HI08} and \cite{HIKR}, the authors developed a geometric approach to this problem. Instead of studying the set $dA^2$ directly, they 
investigated the dot-product set $\Pi(E)=\{x \cdot y: x,y \in E \}$, where $E \subset {\mathbb F}_q^d$. They proceeded to show that if this set is sufficiently large, than so is the dot product set $\Pi(E)$, with results for $dA^2$ following as an immediate corollary. The result thus obtained can be summarized as follows. 

\begin{theorem} \label{geom} Let $E \subset {\mathbb F}_q^d$ and
define the {\rm incidence function}
\begin{equation}\label{if} \nu(t)=\{(x,y) \in E \times E: x \cdot
y=t\}.\end{equation}

Then
\begin{equation} \label{L2} \sum_{t \in {\mathbb F}_q }\nu^2(t) \leq
{|E|}^4q^{-1}+|E|q^{2d-1} \sum_{k \neq (0,\dots,0)}
|E \cap l_k|{|\widehat{E}(k)|}^2+(q-1)q^{-1}{|E|}^2E(0, \dots, 0),
\end{equation} where
\begin{equation}\label{linek} l_k=\{tk: t \in {\mathbb
F}^*_q\}.\end{equation}

Moreover,
\begin{equation} \label{pointwise} \nu(t)={|E|}^2q^{-1}+R(t),
\end{equation} with
\begin{equation}\hspace{.15in}\left\{\begin{array}{llllll} |R(t)|
&\leq &|E|q^{\frac{d-1}{2}}, &\mbox{ for }\;t \not=0, \\ \hfill \\
|R(0)| &\leq &|E|q^{\frac{d}{2}}.\end{array}\right.\label{trb}\end{equation}
\end{theorem}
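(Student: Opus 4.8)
The plan is to run the standard finite-field Fourier machinery on ${\mathbb F}_q^d$. Fix a nontrivial additive character $\chi$ of ${\mathbb F}_q$ and normalize $\widehat E(m)=q^{-d}\sum_{x}E(x)\chi(-x\cdot m)$, where $E(x)$ is the indicator of $E$; then Plancherel reads $\sum_m|\widehat E(m)|^2=q^{-d}|E|$ and inversion at the origin gives $\sum_m\widehat E(m)=E(0,\dots,0)$. Writing the indicator of $\{x\cdot y=t\}$ as $q^{-1}\sum_{s}\chi(s(x\cdot y-t))$ and isolating $s=0$ produces the common starting point
$$\nu(t)=q^{-1}|E|^2+R(t),\qquad R(t)=q^{-1}\sum_{s\neq 0}\chi(-st)\,S(s),\qquad S(s)=\sum_{x,y}E(x)E(y)\chi(s\,x\cdot y),$$
from which both halves of the theorem follow.

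For the $L^2$ bound I would expand $\sum_t\nu^2(t)$ as the number of quadruples $(x,y,x',y')\in E^4$ with $x\cdot y=x'\cdot y'$; character orthogonality rewrites this as $q^{-1}\sum_{s}|S(s)|^2$. The term $s=0$ gives $S(0)=|E|^2$ and hence the main term $q^{-1}|E|^4$. For $s\neq 0$ I use $S(s)=q^d\sum_{x}E(x)\widehat E(-sx)$; after peeling off the $s$-independent piece coming from $x=0$ (namely $E(0,\dots,0)|E|$, which is summed exactly over $s\neq 0$ to account for the last term $(q-1)q^{-1}|E|^2E(0,\dots,0)$), Cauchy--Schwarz in $x$ bounds the rest by $|E|\,q^{2d}\sum_{x\neq 0}E(x)|\widehat E(-sx)|^2$. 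Summing over $s\neq 0$ and regrouping the dilates $\{-sx:s\neq0\}$ of each $x$ into the punctured line $l_x$ turns this into $\sum_{k\neq0}|E\cap l_k||\widehat E(k)|^2$, producing the middle term $|E|q^{2d-1}\sum_{k\neq0}|E\cap l_k||\widehat E(k)|^2$.

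For the pointwise bound I would instead work on ${\mathbb F}_q^{2d}$ and apply Plancherel to $\nu(t)=\sum_{(x,y)}(E\times E)(x,y)\,V_t(x,y)$, where $V_t$ is the indicator of $\{x\cdot y=t\}$. A direct character computation gives $\widehat{V_t}(m,n)=q^{-1}\delta_{m,0}\delta_{n,0}+q^{-d-1}K(t,m\cdot n)$ with the Kloosterman sum $K(t,w)=\sum_{s\neq0}\chi(-st-s^{-1}w)$; the Kronecker term reproduces $q^{-1}|E|^2$ and the remainder becomes
$$R(t)=q^{d-1}\sum_{m,n}\widehat E(m)\widehat E(n)\,\overline{K(t,m\cdot n)}.$$
I then apply Cauchy--Schwarz in $m$, so that $|R(t)|^2\le q^{2d-2}\big(q^{-d}|E|\big)\sum_m|B_m|^2$ with $B_m=\sum_n\widehat E(n)\overline{K(t,m\cdot n)}$, and evaluate $\sum_m|B_m|^2$ \emph{exactly}: expanding the product of two Kloosterman sums and summing over $m$, the orthogonality relation $\sum_m\chi(m\cdot\xi)=q^d\delta_{\xi,0}$ forces $s^{-1}n=s'^{-1}n'$, i.e.\ $n$ and $n'$ collinear, collapsing the $n,n'$ sum to a one-dimensional (collinear) expression that Plancherel controls by $\lesssim q|E|$. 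This gives $|R(t)|\lesssim|E|q^{(d-1)/2}$ for $t\neq0$.

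The exponent jumps at $t=0$, and tracking why is the crux. In $K(t,w)$ the degenerate frequencies $w=m\cdot n=0$ contribute only $-1$ when $t\neq0$ but the full $q-1$ when $t=0$, and in the second-moment computation the oscillating factor $\chi((s-s')t)$ that damps the collinear diagonal is present exactly when $t\neq0$; losing it costs one factor of $q$, so $\sum_m|B_m|^2\lesssim q^2|E|$ and $|R(0)|\lesssim|E|q^{d/2}$. I expect this pointwise estimate to be the main obstacle: inserting Weil's bound $|K|\le2\sqrt q$ termwise is too lossy by a factor $q^{d/2}$, so the saving must come from the \emph{exact} evaluation of the Kloosterman second moment via orthogonality, together with the careful separation of the degenerate collinear and origin terms that distinguishes the two regimes. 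The $L^2$ estimate, by comparison, is bookkeeping once the quadruple count is in place.
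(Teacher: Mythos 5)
You should know at the outset that this paper never proves Theorem \ref{geom}: it is quoted as background from \cite{HI08} and \cite{HIKR}, and the nearest argument actually carried out in the paper is the $\mathbb{Z}_{p^{\ell}}$ analogue (Theorem \ref{Thm:dotprod}). So the comparison below is with the standard finite-field argument that the paper adapts.

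Your $L^2$ argument has a genuine gap, and it sits exactly at the step you passed over in one clause. Writing $S(s)=E(0,\dots,0)|E|+B_s$ with $B_s=q^d\sum_{x\neq 0}E(x)\widehat{E}(-sx)$, you bounded $q^{-1}\sum_{s\neq 0}|S(s)|^2$ by the sum of the squares of the two pieces, silently discarding the cross term
\begin{equation*}
2\,q^{-1}E(0,\dots,0)\,|E|\,\mathrm{Re}\sum_{s\neq 0}B_s
 \;=\; 2\,E(0,\dots,0)\,|E|\,q^{-1}\Bigl(qW-\bigl(|E|-E(0,\dots,0)\bigr)|E|\Bigr),
\end{equation*}
where $W=|\{(x,y)\in E\times E:\ x\neq 0,\ x\cdot y=0\}|$. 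This term is not of fixed sign and can be positive of size $\approx 2|E|^{3}$, so it cannot be dropped. Worse, the target inequality \eqref{L2} is itself false as printed when $0\in E$: take $q\equiv 1\ (\mathrm{mod}\ 4)$, $d=2$, and $E=\{(t,it):t\in\mathbb{F}_q\}$ the isotropic line, so that $x\cdot y=0$ for all $x,y\in E$. Then $\sum_t \nu^2(t)=|E|^4=q^4$, while the right side of \eqref{L2} evaluates (using $\widehat{E}=q^{-1}$ on $E^{\perp}=E$ and $|E\cap l_k|=q-1$ for $0\neq k\in E$) to $q^3+q^2(q-1)^2+q(q-1)=q^4-q^3+2q^2-q<q^4$; for $q=5$ this is $545<625$. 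The repair is to apply Cauchy--Schwarz over \emph{all} $x\in E$, origin included, which legitimately yields \eqref{L2} with the last term enlarged to $(q-1)q^{-1}|E|^{3}E(0,\dots,0)$ (the isotropic line then gives equality, so this is sharp); alternatively, delete the origin from $E$ or restrict the sum to $t\in\mathbb{F}_q^{*}$ — which is all that Corollary \ref{L2dot} uses — and then no origin term is needed at all. In short, the third term of \eqref{L2} is evidently a misprint inherited from the citation, and your derivation, by dropping a term of indeterminate sign, reproduces the misprint rather than proving a true bound.

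Your pointwise argument, by contrast, is correct, though it runs dual to the standard route. The usual proof — the one this paper actually generalizes to $\mathbb{Z}_{p^{\ell}}$ in Theorem \ref{Thm:dotprod} — applies Cauchy--Schwarz in $x$ to $R(t)=q^{-1}\sum_{s\neq 0}\sum_{x,y\in E}\chi(s(x\cdot y-t))$, extends $x$ over $\mathbb{F}_q^d$, and uses orthogonality to reduce to the configuration count $\{sy=s'y'\}$: the diagonal $s=s'$ contributes $|E|^2q^{d-1}$, and the off-diagonal carries the factor $\chi((s'-s)t)$, which cancels for $t\neq 0$ and costs exactly one factor of $q$ at $t=0$. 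Your version via $\widehat{V_t}(m,n)=q^{-1}\delta_{m,0}\delta_{n,0}+q^{-d-1}K(t,m\cdot n)$ and the exact second moment $\sum_m|B_m|^2$ is a faithful Fourier-side transcription of this: orthogonality in $m$ forces $s^{-1}n=s'^{-1}n'$, i.e.\ the same collinear configurations, and your estimates $\sum_m|B_m|^2\lesssim q|E|$ ($t\neq 0$) and $\lesssim q^2|E|$ ($t=0$) are right, as is your diagnosis that inserting Weil termwise loses $q^{d/2}$. One refinement: the collinear sum regroups as $\sum_{l}\bigl|\sum_{k\in l}\widehat{E}(k)\bigr|^2\geq 0$ entering with a \emph{negative} sign when $t\neq 0$, so keeping it yields $\sum_m|B_m|^2\leq q|E|$ exactly and hence the theorem's constant-one bound $|R(t)|\leq |E|q^{\frac{d-1}{2}}$, not merely $\lesssim$; at $t=0$ your method gives $|R(0)|\leq \sqrt{2}\,|E|q^{\frac{d}{2}}$, and the clean constant is easiest from the configuration-count side ($|R(0)|^2\leq |E|q^{d-2}(q-1)^2|E|\leq |E|^2q^{d}$).
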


\begin{corollary} \label{L2dot} Let $E \subset {\mathbb F}_q^d$ such that
$|E|>q^{\frac{d+1}{2}}$. Then
$$ {\mathbb F}_q^{*} \subseteq \Pi(E).$$

This result cannot in general be improved in the following sense:
\renewcommand{\theenumi}{\roman{enumi}}
\begin{enumerate}  \item Whenever ${\mathbb F}_q$ is a quadratic
extension, for any
$\epsilon>0$ there exists $E \subset {\mathbb F}_q^d$ of size $
\approx q^{\frac{d+1}{2}-\epsilon}$, such that $|\Pi(E)|=o(q)$. In
particular, the set of dot products does not contain a positive
proportion of the elements of ${\mathbb F}_q$. \item For
$d=4m+3,\,m\geq0$, for any $q\gg1$ and any $t\in {\mathbb F}_q^*$,
there exists $E$ of cardinality $ \approx q^{\frac{d+1}{2}}$, such
that $t\not\in\Pi(E).$ \end{enumerate}
\end{corollary}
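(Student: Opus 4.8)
The plan is to separate the positive inclusion, which falls out immediately from the pointwise bound in Theorem~\ref{geom}, from the two sharpness constructions, which are where the real work lies.

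First I would prove $\mathbb{F}_q^*\subseteq\Pi(E)$. Fix $t\in\mathbb{F}_q^*$. By \eqref{pointwise} and the first line of \eqref{trb},
\begin{equation*}
\nu(t)\;\ge\;|E|^2q^{-1}-|R(t)|\;\ge\;|E|^2q^{-1}-|E|q^{\frac{d-1}{2}}\;=\;|E|q^{-1}\bigl(|E|-q^{\frac{d+1}{2}}\bigr).
\end{equation*}
Hence $|E|>q^{(d+1)/2}$ forces $\nu(t)>0$, i.e.\ there is a pair $(x,y)\in E\times E$ with $x\cdot y=t$, so $t\in\Pi(E)$. Since $t$ was arbitrary, $\mathbb{F}_q^*\subseteq\Pi(E)$. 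I would remark that the value $t=0$ is excluded precisely because the weaker estimate $|R(0)|\le |E|q^{d/2}$ in \eqref{trb} would instead demand $|E|>q^{(d+2)/2}$.

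For (i) I would exploit the subfield. Writing $q=q_0^2$ with $\mathbb{F}_{q_0}\subset\mathbb{F}_q$, any configuration assembled from $\mathbb{F}_{q_0}$-coordinates has all of its dot products trapped in $\mathbb{F}_{q_0}$, so $|\Pi(E)|\le q_0=q^{1/2}=o(q)$. The basic model is $E=\mathbb{F}_{q_0}^d$, for which $|E|=q^{d/2}$ and $\Pi(E)\subseteq\mathbb{F}_{q_0}$; to raise the exponent toward $\tfrac{d+1}{2}$ one thickens this set --- combining an $\mathbb{F}_q$-totally isotropic subspace, on which the bilinear form and hence every dot product vanishes, with subfield directions in the transverse block --- as in \cite{HI08,HIKR}. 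The delicate point is the bookkeeping: each $\mathbb{F}_q$-coordinate that is made ``full'' tends to spread the dot products across all of $\mathbb{F}_q$, so keeping $|\Pi(E)|=o(q)$ while pushing $|E|$ up to $q^{(d+1)/2-\epsilon}$ must be arranged with care.

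For (ii), with $d=4m+3$, I would reduce the omission of a value $t$ to a non-representation statement for a quadratic form. On any level set $\{x:\|x\|=s\}$ one has $x\cdot y=s-\tfrac12\|x-y\|$, so a dot product equals $t$ exactly when $\|x-y\|=2(s-t)$; it thus suffices to build a set $E$ of size $\approx q^{(d+1)/2}$ lying on such a quadric inside a suitable subspace whose difference set misses $2(s-t)$. No subspace can itself serve, since on a subspace the bilinear form is either identically zero --- forcing isotropic dimension $\le\tfrac{d-1}{2}$, hence $|E|\le q^{(d-1)/2}$ --- or already surjects onto $\mathbb{F}_q$; this is why a genuine quadric is needed and why the size $q^{(d+1)/2}$ is the natural target. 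The congruence $d\equiv 3\pmod 4$ enters through the discriminant and Witt type of the relevant form, which is what guarantees that the prescribed $t\in\mathbb{F}_q^*$ is not represented, and I would follow the construction of \cite{HIKR}.

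The hard part is entirely in the sharpness examples: the inclusion is a one-line consequence of \eqref{trb}, whereas in (i) the obstacle is matching the size $q^{(d+1)/2-\epsilon}$ to a dot-product set of size $o(q)$, and in (ii) it is the quadratic-form type computation that pins down the role of $d\equiv 3\pmod 4$ while simultaneously producing a configuration of size $q^{(d+1)/2}$. Both are carried out in \cite{HI08,HIKR}, and reproducing them with the correct bookkeeping is the crux.
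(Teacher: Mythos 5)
Your treatment of the inclusion $\mathbb{F}_q^{*}\subseteq\Pi(E)$ is correct and complete: by \eqref{pointwise} and the first line of \eqref{trb}, $\nu(t)\geq |E|^2q^{-1}-|E|q^{\frac{d-1}{2}}=|E|q^{-1}\bigl(|E|-q^{\frac{d+1}{2}}\bigr)>0$ for every unit $t$, and your aside about why $t=0$ is excluded (the weaker bound $|R(0)|\leq |E|q^{\frac{d}{2}}$ would demand $|E|>q^{\frac{d+2}{2}}$) is also accurate. One point of orientation: this paper contains no proof of the corollary at all --- it is quoted as background from \cite{HI08} and \cite{HIKR} --- so the inclusion part of your proposal is exactly the intended one-line deduction from Theorem \ref{geom}, and for the sharpness items the comparison must be with those references, to which you also defer.

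The genuine gap is in your sketch of (i), where the mechanism you propose would fail. The hybrid of ``an $\mathbb{F}_q$-totally isotropic subspace $W$ with subfield directions in the transverse block'' cannot reach size $q^{\frac{d+1}{2}-\epsilon}$ for small $\epsilon$ while keeping $|\Pi(E)|=o(q)$: if the transverse part is not contained in $W^{\perp}$, then for a fixed transverse $u'\notin W^{\perp}$ the cross term $w\cdot u'$, as $w$ ranges over all of $W$, is a surjective linear functional onto $\mathbb{F}_q$, so $\Pi(E)=\mathbb{F}_q$; while if the transverse part lies in $W^{\perp}$, then (for $d$ odd) $W^{\perp}/W$ is one-dimensional and the set caps out at about $q^{\frac{d-1}{2}}\cdot q_0=q^{\frac{d}{2}}$ --- no larger than the plain grid $\mathbb{F}_{q_0}^d$, i.e.\ only $\epsilon=\tfrac12$. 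The construction that actually works is multiplicative rather than isotropic: dilate the subfield grid, $E=\bigcup_{c\in C}c\,\mathbb{F}_{q_0}^d$ with $C$ a multiplicative subgroup of size roughly $q^{\frac12-\epsilon}$, so that $x\cdot y=cc'(f\cdot f')$ forces $\Pi(E)\subseteq C\cdot C\cdot\mathbb{F}_{q_0}=C\,\mathbb{F}_{q_0}$, of size $O(q^{1-\epsilon})=o(q)$, while $|E|\approx q^{\frac{d+1}{2}-\epsilon}$ (cf.\ \cite{HIKR}). For (ii), your identity $x\cdot y=s-\tfrac12\|x-y\|$ on a sphere is fine, but the content lies in the step you leave entirely to the references: with $W$ maximal isotropic of dimension $\frac{d-1}{2}$, the quotient form on $W^{\perp}/W$ has discriminant $(-1)^{\frac{d-1}{2}}$, which equals $-1$ exactly when $d=4m+3$; that computation, plus the further adjustment needed to omit an \emph{arbitrary} prescribed unit $t$ (not merely those in a fixed square class, e.g.\ by thinning the transverse line to a set $A$ with $A\cdot A$ avoiding one value), is the crux, and none of it is carried out. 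So: the positive part is proved; of the sharpness claims, (ii) is an unexecuted plan and (i) rests on a mechanism that demonstrably cannot deliver the claimed size.
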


In the Euclidean setting, one can ask, in analogy with the Erd\H os distance problem, how many distinct dot products does a finite subset ${\mathbb R}^d$, $d \ge 2$ determine? The lattice example suggests, as it does in the case of the distance problem, that $N$ points in ${\mathbb R}^d$ determine at least $N^{\frac{2}{d}}$ distinct dot products, up to logarithmic factors. In two dimensions this problem was recently resolved by the second listed author, Oliver Roche-Newton and Misha Rudnev (\cite{IRR11}).

\subsection{The focus of this article} In this paper, we extend the considerations above to the setting of finite cyclic rings $\mathbb{Z}_{p^l}=\mathbb{Z}/p^l\mathbb{Z}$ where $p$ is a fixed odd prime. New difficulties 
arise as these rings have many nonunits and in fact zero divisors.  For example, unique factorization fails in the polynomial ring $\mathbb{Z}_{p^2}[x]$ 
as $(x-p)^2=x(x-2p)$. One reason for considering this situation is if one is interested in answering questions about sets $E \subset \mathbb{Q}^d$ of 
rational points, one can ask questions about dot product sets and distance sets for such sets and how they compare to the answers in $\mathbb{R}^d$. 
Note by scale invariance of these questions, the problem of obtaining sharp bounds for the relationship of $|\Delta(E)|$ and $|E|$ for subsets $E$ of $\mathbb{Q}^d$ would be the same as for subsets of $\mathbb{Z}^d$ as we can scale the rational points to clear their denominators without changing 
$|\Delta(E)|$ or $|E|$.
Then for any fixed prime $p$, for a high enough prime power $p^l$ the set $E \subseteq \mathbb{Z}^d$ will reduce injectively to a subset $\bar{E}$ of 
$\mathbb{Z}_{p^l}^d$ with same size "distance set" i.e., $|\Delta(E)|=|\Delta(\bar{E})|$ where $\Delta(\bar{E})$ is defined as in the finite field case. 
Thus bounds between sizes of sets and the sizes of their 
distance sets obtained over $\mathbb{Z}_{p^l}$ translate to information for sets of rational or integer points and their distance sets. Thus information on distance sets 
obtained over $\mathbb{R}$ 
and $\mathbb{Z}_{p^l}$ for large $l$ (or equivalently over the $p$-adic integers) both give apriori information for questions framed for rational or integer points. 
This is an example of the Hasse principle where facts about rational or integer points can be obtained by using the arithmetic 
completions of $\mathbb{Q}$: the real numbers and the $p$-adic numbers for all prime numbers $p$.

In this paper we concentrate on the $p$-local analysis over $\mathbb{Z}_{p^l}$, leaving questions of assembling the local to global picture 
for a later time. We provide nearly sharp bounds for the dot-product and distance problems in this setting. 

Throughout, unless otherwise noted, $p$ will denote an odd but otherwise arbitrary prime, and $q = p^{\ell}$ will be an $\ell$-th power of an odd prime.  For a ring $R$, we let $R^{\times}$ denote the set of units in $R$.

\vskip.125in

\noindent For $x \in \mathbb{Z}_q^d$, we put $\| x \| = x_1^2 + \dots + x_d^2$.  Also, given $E \subset \mathbb{Z}_q^d$, we define the \emph{distance set} as $\Delta(E) = \{ \| x - y \| : x ,y \in E\}$.
\begin{theorem}\label{Thm:distances}
Let $E \subset \mathbb{Z}_{q}^d$, where $q = p^{\ell}$.  Suppose $|E| \gg \ell (\ell + 1)q^{\frac{(2\ell -1 )d}{2 \ell} + \frac{1}{2\ell}}$.  Then,
\[
\Delta(E) \supset \mathbb{Z}_q^{\times}.
\]
\end{theorem}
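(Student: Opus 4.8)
The plan is to run a Fourier-analytic second-moment argument adapted to the presence of zero divisors in $\mathbb{Z}_q$. Fix a unit $t \in \mathbb{Z}_q^{\times}$ and let $\lambda(t) = |\{(x,y) \in E \times E : \|x-y\| = t\}|$; it suffices to prove $\lambda(t) > 0$. Write $\chi$ for a fixed nontrivial additive character of $\mathbb{Z}_q$, let $E$ also denote the indicator of the set, and set $\widehat{E}(k) = q^{-d}\sum_x E(x)\chi(-x \cdot k)$. By orthogonality, $\lambda(t) = q^{-1}\sum_{s}\chi(-st)\Sigma(s)$ with $\Sigma(s) = \sum_{x,y}E(x)E(y)\chi(s\|x-y\|)$; expanding $E$ in its Fourier series and collapsing the resulting $y$-sum yields the clean identity $\Sigma(s) = q^d \sum_k |\widehat{E}(k)|^2 \prod_{m=1}^d g(s,k_m)$, where $g(s,b) = \sum_{w \in \mathbb{Z}_q}\chi(sw^2 + bw)$. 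The $s=0$ term already produces the heuristic main term $q^{-1}|E|^2$.

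The first technical step, and the place where $\mathbb{Z}_q$ departs from the field case, is to evaluate $g(s,b)$ when $s$ is a zero divisor. Writing $s = p^j u$ with $u$ a unit and $0 \le j \le \ell-1$, the substitution $w = w_0 + p^{\ell-j}v$ shows that $g(p^j u,b)$ vanishes unless $p^j \mid b$, in which case $g(p^j u,b) = p^j\, g^{(\ell-j)}(u, b/p^j)$ is a genuine unit-leading quadratic Gauss sum over $\mathbb{Z}_{p^{\ell-j}}$. Completing the square (legitimate since $p$ is odd and $u$ is a unit mod $p^{\ell-j}$) gives $g(p^j u, b) = p^j \chi^{(\ell-j)}(-(b/p^j)^2/(4u))\, G^{(\ell-j)}(u)$ with $|G^{(\ell-j)}(u)| = p^{(\ell-j)/2}$, so $|g(p^j u, b)| = p^{(\ell+j)/2}$ whenever $p^j \mid b$.

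Next I isolate $k=0$. Since $\prod_m g(s,0) = G(s)^d = \sum_z \chi(s\|z\|)$, summing the $k=0$ term against $\chi(-st)$ telescopes exactly to $|E|^2 q^{-d}S_d(t)$, where $S_d(t) = |\{z : \|z\| = t\}|$; this absorbs the main term and leaves $\lambda(t) = |E|^2 q^{-d}S_d(t) + R(t)$, with $R(t)$ the sum over $k \neq 0$. For a unit $t$ the quadric $\|z\|=t$ has no singular points mod $p$ (its gradient $2z$ vanishes only at $z\equiv 0$, where $\|z\|\equiv 0\neq t$), so Hensel lifting from the standard $\mathbb{F}_p$ count gives $S_d(t) = N_1\, p^{(\ell-1)(d-1)} \gtrsim q^{d-1}$ for $d \ge 2$, whence the main term is $\gtrsim |E|^2 q^{-1}$.

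It remains to bound $R(t) = q^{d-1}\sum_{k \neq 0}|\widehat{E}(k)|^2 \sum_{s \neq 0}\chi(-st)\prod_m g(s,k_m)$, which is the crux. For fixed $k \ne 0$ only $s$ with $v_p(s) \le v_p(k)$ survive; writing $s = p^i u$ and inserting the evaluation above, the inner sum over $u$ at level $i$ becomes a twisted Kloosterman/Salié sum $\sum_{u \in \mathbb{Z}_{p^{\ell-i}}^{\times}} \psi(u)\,\chi^{(\ell-i)}(-ut - \|k/p^i\|/(4u))$. Because $t$ is a unit these sums enjoy square-root cancellation, of size $\lesssim p^{(\ell-i)/2}$ (classical Salié/Weil when $\ell-i=1$, and $p$-adic stationary phase for the higher levels, the degenerate case $p \mid \|k/p^i\|$ only improving matters since then the phase has no unit critical point). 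Feeding this into the Parseval-type identity $\sum_{k \in p^i \mathbb{Z}_q^d}|\widehat{E}(k)|^2 = q^{-d}p^{-id}\nu_i \le q^{-d}|E|$, where $\nu_i$ counts pairs of $E$ agreeing mod $p^{\ell-i}$, and summing the geometric series over the $\ell$ valuation levels gives
\[
|R(t)| \lesssim \ell(\ell+1)\,|E|\,q^{\frac{(d-1)(2\ell-1)}{2\ell}}.
\]
Comparing with the main term $\gtrsim |E|^2 q^{-1}$ forces $\lambda(t) > 0$ exactly when $|E| \gg \ell(\ell+1)\, q^{\frac{(2\ell-1)d+1}{2\ell}}$, which is the hypothesis; as $t$ was an arbitrary unit, $\mathbb{Z}_q^{\times} \subset \Delta(E)$. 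The main obstacle is precisely the uniform square-root cancellation for these twisted Kloosterman sums over $\mathbb{Z}_{p^{\ell-i}}^{\times}$, including the degenerate levels with $p \mid \|k/p^i\|$; without it the trivial bound loses exactly the factor $q^{1/(2\ell)} = p^{1/2}$ that separates the sharp exponent from the naive one.
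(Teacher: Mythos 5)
Your proposal is correct, and its skeleton coincides with the paper's: a second-moment Fourier expansion of the incidence function, the zero frequency producing the main term $|E|^2q^{-d}|S_t|$, the tail organized by the $p$-adic valuation of the dual parameter, quadratic Gauss sums evaluated by completing the square, and the crux reduced to square-root cancellation for Legendre-twisted Kloosterman (Sali\'e) sums over $\mathbb{Z}_{p^{\beta}}^{\times}$ with unit linear coefficient and possibly degenerate inverse coefficient --- precisely the paper's claim \eqref{claim}. You differ in how the two key ingredients are established. For the main term you count the sphere by smoothness plus Hensel lifting (a unit radius forces $z \not\equiv 0 \bmod p$, so the gradient $2z$ is nonsingular), which is more elementary than the paper's Gauss-sum computation proving Lemma \ref{Lem:spheres}, at the cost of yielding only $S_d(t) \gtrsim q^{d-1}$ rather than the asymptotic $q^{d-1}(1+o(1))$ --- but the lower bound is all the theorem needs. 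For the error term you keep the exact identity, restrict to levels $i \le v_p(k)$, and use the sublattice Parseval bound $\sum_{k \in p^i\mathbb{Z}_q^d}|\widehat{E}(k)|^2 \le q^{-d}|E|$ (the same device the paper deploys inside the proof of Theorem \ref{Thm:dotprod}), whereas the paper packages the estimate as the uniform pointwise decay of Lemma \ref{Lem:spheredecay} followed by full Plancherel; the arithmetic is identical, both landing on $|R(t)| \lesssim \ell(\ell+1)|E|q^{\frac{(d-1)(2\ell-1)}{2\ell}}$, and your geometric-series bookkeeping in fact shows the factor $\ell(\ell+1)$ could be replaced by an absolute constant since the levels are dominated by $i = \ell-1$. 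For the exponential-sum crux you invoke classical Weil/Sali\'e at level $\beta = 1$ and $p$-adic stationary phase for $\beta \ge 2$, correctly observing that the degenerate case $p \mid a$ only helps because the phase $au^{-1}+bu$ has no unit critical point (indeed the sum then vanishes for $\beta \ge 2$); the paper instead proves \eqref{claim} via Weil's divisor bound in the even case and, in the odd case, the explicit Sali\'e evaluation of Lemma \ref{Lem:IK} combined with Hensel's lemma, handling the degenerate case by the bijection $x \mapsto a/x + bx$ that reduces the twisted sum to a Gauss sum $\tau(\psi,\chi_b)$. Your stationary-phase route is standard and sound --- a complete write-up would just need to carry the splitting $u = x + p^{\lceil \beta/2 \rceil}y$ through the mod-$p$ twist, which is exactly the detail the paper's case analysis supplies explicitly.
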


\noindent Given $E \subset \mathbb{Z}_q^d$, we define the \emph{dot-product set} $\prod(E) = \{x \cdot y : x , y \in E\}$, where $x \cdot y = x_1y_1 + \dots + x_d y_d$ is the usual dot product.

\begin{theorem}\label{Thm:dotprod} Let $E \subset \mathbb{Z}_q^d$, where $q = p^{\ell}$.  Suppose $|E| \gg \ell q^{\frac{(2 \ell - 1)d}{2 \ell} + \frac{1}{2 \ell}}$.  Then,
\[
\prod(E) \supset \mathbb{Z}_q^{\times}.
\]
\end{theorem}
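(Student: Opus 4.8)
The plan is to run the character/Fourier method behind Theorem~\ref{geom} and Corollary~\ref{L2dot}, but over $\mathbb{Z}_q$ with $q=p^{\ell}$, where the new work is to control the \emph{non-unit} dual frequencies $s$ divisible by $p$. Fix the additive character $\chi(x)=e^{2\pi i x/q}$ on $\mathbb{Z}_q$ and write $\widehat{E}(m)=\sum_{x\in E}\chi(-m\cdot x)$. Expanding the indicator of $x\cdot y=t$ by $q^{-1}\sum_{s}\chi(s(x\cdot y-t))$ and carrying out the $y$-sum yields
\begin{equation*}
\nu(t):=\#\{(x,y)\in E\times E: x\cdot y=t\}=\frac{|E|^2}{q}+R(t),\qquad R(t)=q^{-1}\sum_{s\neq 0}\chi(-st)\,W(s),
\end{equation*}
where $W(s)=\sum_{x,y\in E}\chi(s\,x\cdot y)=\sum_{x\in E}\widehat{E}(-sx)$. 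Since the $s=0$ term produces the main term $|E|^2/q$ for every $t$, it suffices to prove $|R(t)|<|E|^2/q$ for each unit $t$, which forces $\nu(t)>0$.

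First I would stratify the dual sum by the $p$-adic valuation $j=v_p(s)$, writing $s=p^{j}u$ with $u\in(\mathbb{Z}/p^{\ell-j})^{\times}$ and $0\le j\le \ell-1$. For such $s$ the phase $\chi(s\,x\cdot y)$ depends only on $x\cdot y \bmod p^{\ell-j}$, so the $j$-th stratum is governed by the quotient ring $\mathbb{Z}_{p^{\ell-j}}$ and by the image of $E$ there. The crucial point, and the place where the hypothesis $t\in\mathbb{Z}_q^{\times}$ enters, is that summing a stratum against $\chi(-st)$ over the units $u$ converts the \emph{resonant} part of $W(s)$ (the pairs with $x\cdot y\equiv 0 \bmod p^{\ell-j}$, which carry no oscillation and would otherwise dominate) into a Ramanujan sum $c_{p^{\ell-j}}(t)$. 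For a unit $t$ this sum vanishes whenever $\ell-j\ge 2$ and equals $-1$ when $j=\ell-1$; this is precisely why non-unit values of $t$ must be excluded.

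It then remains to bound the oscillatory part of each stratum, for which I would apply Cauchy--Schwarz in $u$ followed by Plancherel, using the sharp quadratic Gauss-sum bounds over $\mathbb{Z}_{p^{\ell-j}}$ (of modulus $p^{(\ell-j)/2}$ on the unit frequencies) to extract square-root cancellation. The worst stratum is $j=\ell-1$, where the estimate reduces to the genuine field-level ($\mathbb{F}_p$) bound \eqref{trb} applied to $E\bmod p$. Bounding all $\ell$ strata by this worst value gives $|R(t)|\lesssim \ell\,|E|\,p^{\frac{(2\ell-1)d}{2}-\ell+\frac12}$, the prefactor $\ell$ being the number of strata. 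Comparing with the main term, the inequality $\ell\,|E|\,p^{\frac{(2\ell-1)d}{2}-\ell+\frac12}<|E|^2/q=|E|^2p^{-\ell}$ rearranges to exactly $|E|\gg \ell\,q^{\frac{(2\ell-1)d}{2\ell}+\frac{1}{2\ell}}$, since $q^{\frac{(2\ell-1)d}{2\ell}+\frac{1}{2\ell}}=p^{\frac{(2\ell-1)d}{2}+\frac12}$; this yields $\nu(t)>0$ for every unit $t$, as claimed.

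The main obstacle is the oscillatory estimate in the intermediate strata $1\le j\le \ell-2$: one must obtain the full square-root-cancellation bound uniformly across strata, which requires the precise Gauss-sum evaluation on each valuation shell rather than a lossy Cauchy--Schwarz through the collision counts $\#\{(x,y):x\equiv y \bmod p^{\ell-j}\}$ (a crude bound costs an extra factor $p^{1/2}$ and misses the exponent). A secondary difficulty, analogous to the subfield obstruction over $\mathbb{F}_q$, is controlling sets $E$ that concentrate on the sublattices $p^{k}\mathbb{Z}_q^{d}$: such $E$ inflate the resonant counts, and it is exactly the vanishing of $c_{p^{\ell-j}}(t)$ for unit $t$ that neutralizes them. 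I expect Theorem~\ref{Thm:distances} to follow the same template with one additional averaging variable for the translation, which accounts for its extra factor of $(\ell+1)$.
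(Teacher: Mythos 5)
Your proposal reproduces the skeleton of the paper's argument: the Fourier expansion of $\nu(t)$ with main term $|E|^2/q$, the stratification by $j=v_p(s)$ writing $s=p^ju$ with $u\in\mathbb{Z}_{p^{\ell-j}}^{\times}$, the exploitation of $t$ being a unit through the Ramanujan-sum evaluation $c_{p^{\ell-j}}(t)=\mu(p^{\ell-j})\in\{0,-1\}$ (the paper phrases this as: a nontrivial additive character of this type summed over the unit group is nonpositive), and the final arithmetic, since your claimed bound $\ell|E|p^{\frac{(2\ell-1)d}{2}-\ell+\frac12}$ equals the paper's $\ell|E|q^{\frac{(d-1)(2\ell-1)}{2\ell}}$. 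But the heart of the proof --- the bound on what you call the oscillatory part of each stratum --- is left unproved, and the tool you nominate for it is the wrong one. Quadratic Gauss sums never enter the dot-product argument: the phase $s\,x\cdot y$ is bilinear, so after Cauchy--Schwarz (taken in the $x$-variable, not in $u$) and extension of the $x$-sum to all of $\mathbb{Z}_q^d$, plain orthogonality reduces everything to the solution set of $p^j(sy-s'y')=0$; Gauss, Sali\'e and Kloosterman sums are what the paper needs for the sphere Fourier decay in the \emph{distance} theorem (Lemma \ref{Lem:spheredecay}), not here. Moreover, Cauchy--Schwarz in $u$ as you propose would discard the twist $\chi(-p^jut)$, which is exactly where the unit hypothesis on $t$ lives, and ``Plancherel plus Gauss-sum bounds'' never produces a complete quadratic sum in this bilinear setting.

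Your diagnosis of the difficulty is also inverted, which matters because it steers you away from the argument that actually closes. The ``lossy Cauchy--Schwarz through the collision counts,'' which you reject as costing an extra $p^{1/2}$, is precisely the paper's mechanism, and it loses nothing: the diagonal $s=s'$ contributes at most $|E|q^{d-2}p^{\ell-j}\sum_\alpha|R_E(\alpha)|^2\le|E|^2q^{d-2}p^{\ell-j+jd}$ (each fiber of reduction mod $p^{\ell-j}$ has $p^{jd}$ points), giving $|\nu_j(t)|\le|E|q^{\frac{d-1}{2}\left(1+\frac{j}{\ell}\right)}$, which at the worst stratum $j=\ell-1$ is exactly your target $|E|p^{\frac{(d-1)(2\ell-1)}{2}}$. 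The genuinely delicate step, which your resonant/oscillatory split before Cauchy--Schwarz does not capture, is the off-diagonal $s\neq s'$ \emph{after} Cauchy--Schwarz: the paper substitutes $s=ab$, $s'=b$ and splits according to whether $1-a$ is a unit in $\mathbb{Z}_{p^{\ell-j}}$, with the non-unit case requiring a further valuation decomposition $1-a=p^k s$, each branch landing on a unit-group character sum that is nonpositive. Relatedly, your claim that the stratum $j=\ell-1$ ``reduces to \eqref{trb} applied to $E\bmod p$'' is not correct as stated: $E$ projects to $\mathbb{F}_p^d$ with multiplicities as large as $p^{(\ell-1)d}$, and the set-level bound \eqref{trb} does not control the multiplicity-weighted incidence count; it is exactly the $\sum_\alpha|R_E(\alpha)|^2$ estimate above that does. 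So the proposal has the right skeleton and assigns the correct role to the unit hypothesis, but the central estimate is missing and the replacement you suggest would not supply it.
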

\begin{corollary}\label{Cor:dA^2zq} Let $A \subset \mathbb{F}_q$, where $q = p^{\ell}$.  Suppose $|A| > q^{\frac{2\ell - 1}{2\ell} + \frac{1}{2\ell d}}$.  Then, 
\[
\mathbb{Z}_q^{\times} \subset dA^2 = A\cdot A + \dots + A\cdot A.
\]
\end{corollary}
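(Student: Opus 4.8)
The plan is to deduce Corollary \ref{Cor:dA^2zq} directly from Theorem \ref{Thm:dotprod} by feeding the theorem the right point set. I would take $E$ to be the $d$-fold Cartesian power of $A$,
\[
E = \underbrace{A \times A \times \cdots \times A}_{d \text{ times}} \subset \mathbb{Z}_q^d,
\]
so that $|E| = |A|^d$, and then observe that for this choice the dot-product set $\prod(E)$ coincides with the iterated sumset of products $dA^2$.

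To justify the last assertion, write a generic pair of points of $E$ as $x = (a_1, \dots, a_d)$ and $y = (b_1, \dots, b_d)$ with all $a_i, b_i \in A$; then
\[
x \cdot y = a_1 b_1 + a_2 b_2 + \cdots + a_d b_d.
\]
As $x$ and $y$ range independently over $E$, each coordinate pair $(a_i, b_i)$ ranges independently over $A \times A$, so each product $a_i b_i$ ranges over $A \cdot A$ and the sum ranges over $A \cdot A + \cdots + A \cdot A = dA^2$. Hence $\prod(E) = dA^2$ exactly. Consequently it is enough to check that $E$ satisfies the cardinality hypothesis of Theorem \ref{Thm:dotprod}: once it does, the theorem gives $\mathbb{Z}_q^\times \subset \prod(E) = dA^2$.

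For the cardinality, I would raise the hypothesis $|A| > q^{\frac{2\ell - 1}{2\ell} + \frac{1}{2\ell d}}$ to the $d$-th power, obtaining
\[
|E| = |A|^d > q^{\frac{(2\ell - 1)d}{2\ell} + \frac{1}{2\ell}},
\]
which is exactly the exponent appearing in the threshold of Theorem \ref{Thm:dotprod}. Invoking that theorem then finishes the argument.

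The construction is immediate and the identity $\prod(E) = dA^2$ is exact, so the only point that needs care --- and the one I would flag as the single subtlety --- is the bookkeeping of the factor $\ell$ in the hypothesis of Theorem \ref{Thm:dotprod}. Taking $d$-th roots of the requirement $|E| \gg \ell\, q^{\frac{(2\ell-1)d}{2\ell} + \frac{1}{2\ell}}$ leaves a factor $\ell^{1/d}$ on the right, which is subpolynomial in $q$ (recall $\ell = \log_p q$) and is precisely what the clean strict inequality in the corollary suppresses; the threshold should therefore be read in the asymptotic ($\gg$) sense. This is routine, and there is no substantive obstacle beyond correctly applying Theorem \ref{Thm:dotprod}.
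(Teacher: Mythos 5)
Your proposal is correct and is exactly the paper's argument: the authors state that the corollary ``follows easily from Theorem \ref{Thm:dotprod} by setting $E = A \times \dots \times A$,'' which is precisely your construction, including the identity $\prod(E) = dA^2$ and the exponent bookkeeping $|E| = |A|^d > q^{\frac{(2\ell-1)d}{2\ell} + \frac{1}{2\ell}}$. Your remark that the factor $\ell^{1/d}$ is subpolynomial and absorbed into the asymptotic reading of the threshold is a fair and accurate gloss on the paper's (silent) treatment of the same point.
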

\noindent Corollary \ref{Cor:dA^2zq} follows easily from Theorem \ref{Thm:dotprod} by setting $E = A \times \dots \times A$.  Theorem \ref{Thm:dotprod} shows that there exists a constant $B = B(p, \ell) > 0 $ so that $|E| > B q^{\left(\frac{2 \ell - 1}{2 \ell}\right)d}$ implies $\prod(E) \supset \mathbb{Z}^{\times}_{p^{\ell}}$.  To contrast this result, we prove the following;

\begin{theorem}\label{Thm:examples}
For $d \geq 3$, there exists sets $E \subset \mathbb{Z}_q^d$ of size $|E| = b q^{\left(\frac{2 \ell - 1}{2 \ell}\right)d}$, where 

\[
b = \left\{
\begin{array}{cc}
1 & \text{ if } d \text{ is even}\\
 \frac{1}{\sqrt{p}}  & \text{ if } d \text{ is odd}
\end{array} \right.
\]
and yet $|\Delta(E)|=|\prod(E)| = o(p^{\ell})$. In fact $\prod(E)$ and $\Delta(E)$ contain {\bf no} units of $\mathbb{Z}_q$.
\end{theorem}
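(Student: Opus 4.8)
The plan is to pass to the residue field $\mathbb{F}_p$ and exploit totally isotropic subspaces. Write $\pi \colon \mathbb{Z}_q^d \to \mathbb{F}_p^d$ for reduction modulo $p$, and recall that the non-units of $\mathbb{Z}_q$ are exactly the multiples of $p$. Hence for $x,y \in \mathbb{Z}_q^d$ the quantity $x \cdot y$ (respectively $\|x-y\|$) is a non-unit if and only if its reduction modulo $p$ vanishes, i.e. $\pi(x)\cdot \pi(y) = 0$ (respectively $\|\pi(x)-\pi(y)\| = 0$) in $\mathbb{F}_p$. Thus the whole problem reduces to producing a large subset of $\mathbb{F}_p^d$ on which the standard bilinear form vanishes identically, and then lifting it.

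Concretely, I would let $V \subset \mathbb{F}_p^d$ be a maximal totally isotropic subspace for the form $\|\cdot\|$, so that $u \cdot v = 0$ for all $u,v \in V$, and set $E = \pi^{-1}(V)$. For $x,y\in E$ we then have $\pi(x),\pi(y)\in V$, whence $\pi(x)\cdot\pi(y)=0$ and also $\|\pi(x)-\pi(y)\| = \|\pi(x)\| - 2\,\pi(x)\cdot\pi(y) + \|\pi(y)\| = 0$; therefore every element of $\prod(E)$ and of $\Delta(E)$ is a non-unit. The fibres of $\pi$ have size $p^{(\ell-1)d}$, so $|E| = |V|\,p^{(\ell-1)d} = p^{\dim V + (\ell-1)d}$. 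Matching this against $b\,q^{\frac{(2\ell-1)d}{2\ell}} = b\,p^{\frac{(2\ell-1)d}{2}}$ gives $b = p^{\dim V - d/2}$, so the two stated values $b=1$ and $b = p^{-1/2}$ correspond exactly to $\dim V = \lfloor d/2\rfloor$, the largest possible dimension of a totally isotropic subspace.

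To finish I would check that $\prod(E)$ and $\Delta(E)$ are not merely contained in $p\mathbb{Z}_q$ but in fact equal it, which simultaneously yields the common value $|\Delta(E)| = |\prod(E)| = p^{\ell-1} = o(p^{\ell})$ and the required equality of cardinalities. For this, pick $0\neq v \in V$ (possible once $\dim V \geq 1$, i.e. once $d\geq 3$, which is where the hypothesis is used) and a coordinate $j$ with $v_j \neq 0$; lifting $v$ to $\tilde v \in \mathbb{Z}_q^d$, the entry $\tilde v_j$ is a unit. Perturbing $\tilde v$ in the $j$-th coordinate by multiples of $p$ keeps us inside $E$, and since $2\tilde v_j$ is a unit (here $p$ is odd) a one-variable Hensel argument lets $x\cdot y$ and $\|x-y\|$ sweep out all of $p\mathbb{Z}_q$; I expect this to be a short and routine computation.

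The main obstacle is the existence step: producing a totally isotropic subspace of the maximal dimension $\lfloor d/2 \rfloor$. This is precisely a statement about the Witt index of $\|x\| = x_1^2 + \cdots + x_d^2$ over $\mathbb{F}_p$. For $d$ odd the index is always $(d-1)/2$, giving $b = p^{-1/2}$; for $d$ even it equals $d/2$ exactly when the form has maximal Witt index, which one establishes by decomposing $\langle 1,\dots,1\rangle$ into hyperbolic planes using solvability of $x^2+y^2+z^2=0$ and tracking determinants modulo squares. When $p\equiv 1 \pmod 4$ everything can be written down explicitly: with $i^2 = -1$ the vectors $e_{2k-1} + i\,e_{2k}$ are isotropic and mutually orthogonal and span a subspace of dimension $\lfloor d/2 \rfloor$. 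The genuinely delicate point — and the reason the value of $b$ comes out as stated — is this arithmetic dependence of the Witt index on $p$ and $d$, and it is the step I would treat most carefully.
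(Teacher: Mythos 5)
Your proposal is correct in substance and is essentially the paper's own argument: the paper likewise takes $E=\pi^{-1}(\mathfrak{L})$ for a maximal Lagrangian (totally isotropic) subspace $\mathfrak{L}\subseteq\mathbb{F}_p^d$ of dimension $\frac{d-1}{2}$ ($d$ odd) or $\frac{d}{2}$ ($d$ even), makes the identical count $|E|=p^{(\ell-1)d}|\mathfrak{L}|$, and deduces $\prod(E)\subseteq p\mathbb{Z}_q$ and, via the same identity $\|x-y\|=x\cdot x+y\cdot y-2\,x\cdot y$, also $\Delta(E)\subseteq p\mathbb{Z}_q$, so both sets avoid all units and have size at most $p^{\ell-1}=o(p^{\ell})$. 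The only divergences are that your Hensel-type step proving the exact equalities $\prod(E)=\Delta(E)=p\mathbb{Z}_q$ goes beyond what the statement needs (containment in the nonunits already suffices), and that where you sketch the Witt-theoretic existence of the isotropic subspace the paper simply cites \cite{HIS07}; your instinct to treat the even-dimensional case carefully is well founded, since the maximal Witt index $\frac{d}{2}$ there requires $(-1)^{d/2}$ to be a square modulo $p$ (it fails for $p\equiv 3\pmod{4}$ and $d\equiv 2\pmod{4}$), a subtlety the paper's citation elides.
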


\begin{remark}
Since $|\mathbb{Z}_{p^{\ell}}^{\times}| = p^{\ell} - p^{\ell - 1} \neq o(p^{\ell})$, Theorem \ref{Thm:examples} shows that Theorem \ref{Thm:distances} 
and Theorem \ref{Thm:dotprod} are best possible up to the factor of $\frac{1}{2 \ell}$.  In particular, if we fix $p$ and $\ell$ and let $d \to \infty$, then our results are sharp asymptotically.
\end{remark}

\subsubsection{Fourier Analysis in $\mathbb{Z}_q^d$}

For a function $f : \mathbb{Z}_q^d \to \mathbb{C}$, we define the Fourier transform of $f$ as
\[
\widehat{f}(m) = q^{-d}\sum_{x \in \mathbb{Z}_q^d} f(x) \chi(-x \cdot m)
\]
where $\chi(x) = \exp(2 \pi i x/q)$.  We note that
\[
Avg(f) = \widehat{f}(0, \dots, 0) = q^{-d} \sum_{x} f(x)
\]
is the average value of $f(x)$.  Also, we have the following useful orthogonality property.
\begin{lemma}\label{Lem:orthog} Let $\chi(x) = \exp(2 \pi i x / q)$.  Then,
\[
q^{-d} \sum_{x \in \mathbb{Z}_q^d} \chi(x \cdot m) =  \left\{
\begin{array}{cc}
1 & m = (0, \dots , 0) \\
0 & otherwise
\end{array}\right.
\]
\end{lemma}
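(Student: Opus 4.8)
The plan is to exploit the multiplicativity of $\chi$ to factor the $d$-dimensional character sum into a product of one-dimensional sums, and then to evaluate each one-dimensional sum directly as a geometric series. First I would note that $\chi$ is well defined on $\mathbb{Z}_q$, since $\chi(x+q)=\exp(2\pi i(x+q)/q)=\chi(x)$, and that it is a homomorphism in the sense that $\chi(a+b)=\chi(a)\chi(b)$. Writing $x\cdot m = x_1m_1+\dots+x_dm_d$, this homomorphism property gives $\chi(x\cdot m)=\prod_{j=1}^d \chi(x_jm_j)$, so that the full sum factors coordinatewise as
\[
\sum_{x\in\mathbb{Z}_q^d}\chi(x\cdot m)=\prod_{j=1}^d\left(\sum_{t\in\mathbb{Z}_q}\chi(tm_j)\right).
\]

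Next I would evaluate the inner sum $S(m_j):=\sum_{t\in\mathbb{Z}_q}\chi(tm_j)$ for a single coordinate. If $m_j\equiv 0$, then every term equals $1$ and $S(m_j)=q$. If $m_j\not\equiv 0$, then $\omega:=\chi(m_j)=\exp(2\pi i m_j/q)$ is a root of unity different from $1$, because $q\nmid m_j$, and hence
\[
S(m_j)=\sum_{t=0}^{q-1}\omega^t=\frac{\omega^q-1}{\omega-1}=0,
\]
since $\omega^q=\exp(2\pi i m_j)=1$ while the denominator $\omega-1$ is nonzero.

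Combining these two cases, the product $\prod_{j=1}^d S(m_j)$ equals $q^d$ precisely when every coordinate satisfies $m_j\equiv 0$, that is, when $m=(0,\dots,0)$, and it vanishes as soon as a single coordinate $m_j$ is nonzero. Dividing through by $q^d$ then yields the claimed dichotomy. I do not expect any genuine obstacle here, as this is the standard finite-Fourier orthogonality relation; the only point requiring care is the elementary verification that $\omega\neq 1$ whenever $m_j\not\equiv 0\pmod q$, which is exactly what guarantees that the geometric series has nonvanishing denominator and therefore sums to zero.
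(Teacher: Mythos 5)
Your proof is correct: the coordinatewise factorization via the homomorphism property of $\chi$ followed by the geometric-series evaluation of each one-dimensional sum is exactly the standard argument, and the paper itself states Lemma \ref{Lem:orthog} without proof, implicitly invoking this same computation. The one point you flag for care --- that $\omega = \exp(2\pi i m_j/q) \neq 1$ when $q \nmid m_j$ --- is handled correctly, so nothing is missing.
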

In turn, Lemma \ref{Lem:orthog} has the following consequences:
\begin{proposition}\label{Prop:Ftransform}
Let $f , g : \mathbb{Z}_q^d \to \mathbb{C}$.  Then:
\begin{align}
f(x) =  \sum_{m \in \mathbb{F}_q^d} \chi(x \cdot m) \widehat{f}(m)
\\
q^{-d} \sum_{x \in \mathbb{Z}_q^d} f(x) \overline{g(x)} = \sum_{m \in \mathbb{Z}_q^d} \widehat{f}(m) \overline{\widehat{g}(m)}
\end{align}
\end{proposition}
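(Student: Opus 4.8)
The plan is to derive both identities directly from the character orthogonality of Lemma~\ref{Lem:orthog}, reducing each to an interchange of finite sums followed by a single application of orthogonality. No analytic machinery is needed since every sum involved is finite.

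For the inversion formula, I would substitute the definition of $\widehat{f}(m)$ into the right-hand side and interchange the (finite) order of summation:
$$\sum_{m \in \mathbb{Z}_q^d} \chi(x \cdot m) \widehat{f}(m) = q^{-d} \sum_{y \in \mathbb{Z}_q^d} f(y) \sum_{m \in \mathbb{Z}_q^d} \chi((x-y)\cdot m).$$
The inner sum is exactly the object controlled by Lemma~\ref{Lem:orthog}, except that the summation variable is the frequency $m$ rather than the spatial variable appearing in the statement of that lemma. Since the pairing $x \cdot m$ is symmetric in its two arguments, the same orthogonality relation applies verbatim, giving $q^{-d}\sum_m \chi((x-y)\cdot m) = 1$ when $y = x$ and $0$ otherwise. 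Hence only the term $y = x$ survives, and the right-hand side collapses to $f(x)$.

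For the Plancherel identity, I would instead expand both Fourier coefficients on the right-hand side using the definition, use the relation $\overline{\chi(t)} = \chi(-t)$ to combine the two characters into a single $\chi((y-x)\cdot m)$, and reorder the summation so that the sum over $m$ is innermost:
$$\sum_{m} \widehat{f}(m)\,\overline{\widehat{g}(m)} = q^{-2d} \sum_{x}\sum_{y} f(x)\,\overline{g(y)} \sum_{m} \chi((y-x)\cdot m).$$
Orthogonality again collapses the innermost sum to $q^{d}$ times the indicator of $x = y$, leaving $q^{-d}\sum_x f(x)\,\overline{g(x)}$, which is precisely the left-hand side. Alternatively, one could obtain Plancherel by substituting the inversion formula into one of the two factors, but the direct expansion keeps the bookkeeping symmetric in $f$ and $g$.

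The argument presents no genuine obstacle beyond careful index management; the one point worth an explicit remark is the observation used in both parts, namely that Lemma~\ref{Lem:orthog}, although phrased as a sum over the spatial variable, applies equally to a sum over the frequency variable because the bilinear pairing $x \cdot m$ is symmetric, so that both directions of orthogonality are available.
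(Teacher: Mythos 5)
Your proof is correct and follows exactly the route the paper intends: the paper states Proposition \ref{Prop:Ftransform} without a written proof, presenting it as an immediate consequence of the orthogonality relation in Lemma \ref{Lem:orthog}. Your derivation of both the inversion and Plancherel identities by expanding the definition of $\widehat{f}$, interchanging the finite sums, and collapsing the inner character sum via orthogonality (with the correct remark that the symmetry of the pairing $x \cdot m$ lets the lemma apply to a sum over the frequency variable) is precisely the standard argument being invoked.
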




\section{Proof of Distance Results (Theorem \ref{Thm:distances})}

Before we proceed, we comment about the methods used throughout the paper. Considering projections from $\mathbb{Z}_{p^{\ell}} \to \mathbb{F}_p$ and using 
finite field results can often give bounds that ensure that all nonzero elements of $\mathbb{F}_p \setminus \{ 0 \}$ are achieved as distances or dot-products. However this 
translates only to knowing that representatives of units in every mod $p$ equivalence class of $\mathbb{Z}_{p^{\ell}}$ are achieved as corresponding 
distances or dot-products in the corresponding sets in $\mathbb{Z}_{p^{\ell}}$. To ensure that {\bf all} units are achieved as dot-products or distances, we rely on  establishing fundamental Fourier estimates directly for $\mathbb{Z}_{p^{\ell}}$ throughout the paper.

We will need the following additional Lemmas, whose proofs we delay until Section \ref{Sec:prelim}.

\begin{lemma}\label{Lem:spheres}
Let $d \geq 2$ and $j \in \mathbb{Z}_q^{\times}$, where $q$ is odd.  As before, set $\| x \| = x_1^2 + \dots + x_d^2$, and denote by $S_j = \{x \in \mathbb{Z}_q^d : \| x \| = j\}$ the sphere of radius $j$.  Then,
\[
|S_j| = q^{d-1} (1 + o(1)).
\]
\end{lemma}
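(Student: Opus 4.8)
The plan is to count $|S_j|$ by Fourier inversion and reduce everything to one‑dimensional quadratic Gauss sums over $\mathbb{Z}_q$. First I would write the indicator of $\{\|x\| = j\}$ via the orthogonality of Lemma \ref{Lem:orthog}: for $x \in \mathbb{Z}_q^d$ one has $\mathbf{1}[\|x\|=j] = q^{-1}\sum_{s \in \mathbb{Z}_q}\chi(s(\|x\|-j))$, and since $\chi(s\|x\|) = \prod_{i=1}^{d}\chi(sx_i^2)$ this gives
\[
|S_j| = q^{-1}\sum_{s \in \mathbb{Z}_q}\chi(-sj)\,G(s)^d, \qquad G(s) := \sum_{x \in \mathbb{Z}_q}\chi(sx^2).
\]
The frequency $s=0$ yields $G(0)=q$ and hence the expected main term $q^{-1}q^{d}=q^{d-1}$. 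It then remains to show that the contribution $R$ of the nonzero frequencies satisfies $R = o(q^{d-1})$.

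Next I would analyze $G(s)$ for $s \neq 0$. Writing $s = p^{k}w$ with $w \in \mathbb{Z}_q^{\times}$ and $0 \le k \le \ell-1$, and setting $m = \ell-k$, a periodicity argument shows $G(p^{k}w) = p^{k}\,g(w;p^{m})$, where $g(w;p^{m}) = \sum_{x \bmod p^{m}}\exp(2\pi i w x^2/p^{m})$ is the classical quadratic Gauss sum modulo $p^{m}$. For odd $p$ these are standard, and the two facts I need are: $|g(w;p^{m})| = p^{m/2}$, and, crucially, $g(w;p^{m})$ depends on the unit $w$ \emph{only} through its residue modulo $p$ (explicitly $g(w;p^{m}) = p^{m/2}$ when $m$ is even and $g(w;p^{m}) = \epsilon_p\left(\tfrac{w}{p}\right)p^{m/2}$ when $m$ is odd, with $\epsilon_p \in \{1,i\}$). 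I would record these evaluations in Section \ref{Sec:prelim}.

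The heart of the argument is that, although $|G(s)| = p^{(\ell+k)/2}$ grows with $k$ so that a crude triangle‑inequality bound is too lossy, every frequency with $k \le \ell-2$ contributes nothing. Grouping the error by valuation,
\[
R = q^{-1}\sum_{k=0}^{\ell-1} p^{kd}\!\!\sum_{w \in (\mathbb{Z}/p^{m}\mathbb{Z})^{\times}}\!\!\chi(-p^{k}wj)\,g(w;p^{m})^{d}, \qquad m = \ell-k,
\]
and since $g(w;p^{m})^{d} = p^{md/2}\,h(w \bmod p)$ with $h$ depending only on $w$ modulo $p$, I would split $w = w_0 + p w_1$ with $w_0 \in (\mathbb{Z}/p\mathbb{Z})^{\times}$ and $w_1 \in \mathbb{Z}/p^{m-1}\mathbb{Z}$. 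Then $\chi(-p^{k}wj)$ factors as $\chi(-p^{k}w_0 j)$ times a nontrivial additive character of $w_1$ of modulus $p^{m-1}$ (nontrivial because $j$ is a unit and $m-1 \ge 1$), while $h(w \bmod p) = h(w_0)$ is independent of $w_1$. Summing over the complete residue system $w_1$ therefore gives $0$ whenever $m \ge 2$, so only $m=1$, i.e.\ $k = \ell-1$, survives.

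Finally I would evaluate the surviving $k=\ell-1$ term directly: at $m=1$ the inner sum is a classical Gauss sum modulo $p$, which equals $-1$ up to sign when $d$ is even and has magnitude $\left|\epsilon_p\left(\tfrac{-j}{p}\right)\sqrt{p}\right| = \sqrt{p}$ when $d$ is odd. Tracking the powers of $p$ yields $|R| \lesssim q^{d-1}p^{-d/2}$ for even $d$ and $|R| \lesssim q^{d-1}p^{(1-d)/2}$ for odd $d$; in either case $|R| = o(q^{d-1})$ for $d \ge 2$, giving the claim (and in fact the exact value $|S_j| = q^{d-1} - \left(\tfrac{-1}{p}\right)p^{\ell-1}$ when $d=2$). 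The main obstacle is precisely the growth of the non‑unit Gauss sums $G(s)$ for large $v_p(s)$: a bound using only $|G(s)| = p^{(\ell+k)/2}$ is borderline and fails at $d=2$, so the argument must exploit the finer structural fact that $g(w;p^{m})$ sees $w$ only modulo $p$, which is what produces the cancellation annihilating every frequency except those of valuation $\ell-1$.
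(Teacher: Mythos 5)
Your proposal is correct, and although it shares the paper's skeleton---expanding $|S_j| = q^{-1}\sum_{s \in \mathbb{Z}_q}\chi(-sj)G(s)^d$, splitting by the valuation $k = val_p(s)$, and invoking the explicit evaluation of quadratic Gauss sums modulo $p^{m}$---your treatment of the error term is genuinely different from, and sharper than, the paper's. The paper estimates each valuation level $T_i$ separately: when $d(\ell-i)$ is even it writes the unit sum as a complete sum minus the $p$-divisible part and bounds the latter crudely by $p^{\ell-i-1}$; when $d(\ell-i)$ is odd it uses the generalized Gauss sum bound $|\tau(\eta,\chi_{-j})| \leq p^{(\ell-i)/2}$ of Corollary \ref{Cor:gengausssum} plus the same crude remainder, and then sums over $i$, collecting a factor of $\ell$. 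You instead exploit the structural fact that $g(w;p^{m})$ depends on the unit $w$ only through $w \bmod p$, split $w = w_0 + pw_1$, and let orthogonality in $w_1$ (nontrivial precisely because $j$ is a unit) annihilate every level with $m = \ell-k \geq 2$, so that only $k = \ell-1$ survives and can be evaluated exactly. This buys an $\ell$-free error bound $|R| \leq q^{d-1}p^{-d/2}$ (resp.\ $q^{d-1}p^{(1-d)/2}$ for $d$ odd) and the exact count $|S_j| = q - \left(\tfrac{-1}{p}\right)p^{\ell-1}$ at $d=2$; by contrast, the paper's stated bound gives relative error of order $\ell/p$ at $d=2$, which for fixed $p$ and $\ell \to \infty$ does not even establish the $o(1)$ claim, a looseness your vanishing argument removes (both arguments, of course, share the caveat that the relative error at $d=2$ is of size $1/p$, so the $o(1)$ is literal only when $p \to \infty$ or $d \geq 3$). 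Two minor housekeeping points: the lemma as stated allows arbitrary odd $q$, and the paper opens its proof with a one-line Chinese Remainder Theorem reduction to prime powers, which you should include since your argument is written for $q = p^{\ell}$; and the evaluations you promise to record, $g(w;p^{m}) = p^{m/2}$ for $m$ even and $g(w;p^{m}) = \varepsilon_p\left(\tfrac{w}{p}\right)p^{m/2}$ for $m$ odd, do follow directly from Proposition \ref{Prop:quadgausssum}, since $p^{m} \equiv p^{m \bmod 2} \pmod 4$ and $\left(\tfrac{w}{p^{m}}\right) = \left(\tfrac{w}{p}\right)^{m}$.
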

\begin{lemma}\label{Lem:spheredecay}
Identify $S_j$ with its indicator function.  For $j \in \mathbb{Z}_q^{\times}$ with $q = p^{\ell}$, we have
\[
\sup_{m \neq \vec{0} }\left| \widehat{S}_j(m) \right| \leq \ell ( \ell + 1)q^{-\frac{d+ 2 \ell-1}{2\ell}}
\]
\end{lemma}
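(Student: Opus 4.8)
The plan is to expand the indicator of $S_j$ using additive characters and reduce everything to one–dimensional Gauss sums. Writing $\mathbf{1}[\|x\|=j]=q^{-1}\sum_{s\in\mathbb{Z}_q}\chi(s(\|x\|-j))$ and feeding this into the definition of the transform, the product structure of $\|x\|=\sum_i x_i^2$ and $x\cdot m=\sum_i x_i m_i$ lets me factor the character sum coordinatewise:
\[
\widehat{S}_j(m)=q^{-d-1}\sum_{s\in\mathbb{Z}_q}\chi(-sj)\prod_{i=1}^{d}K(s,m_i),\qquad K(s,b)=\sum_{t\in\mathbb{Z}_q}\chi(st^2-bt).
\]
Thus the whole estimate rests on an accurate evaluation of the $K(s,b)$ and on controlling the remaining sum over $s$.

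First I would evaluate $K(s,b)$ stratified by the $p$-adic valuation $a=v_p(s)$, writing $s=p^{a}u$ with $u$ a unit. Splitting $t$ according to its residue modulo $p^{\ell-a}$ shows that $K(p^{a}u,b)=0$ unless $p^{a}\mid b$, and that when $p^{a}\mid b$ one may complete the square (legitimate since $p$ is odd, so $2$ is invertible) to get $|K(p^{a}u,b)|=p^{(\ell+a)/2}$, together with an explicit quadratic phase in $b/p^{a}$ and a prime–power Gauss sum $g_{\ell-a}(u)=\sum_{v\bmod p^{\ell-a}}\exp(2\pi i\,uv^{2}/p^{\ell-a})$ of modulus $p^{(\ell-a)/2}$. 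The term $s=0$ contributes nothing, since $m\neq\vec0$ forces some $m_i\neq0$ and then $K(0,m_i)=0$.

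Next I would assemble the product over $i$ and sum over the units at each level. Since $\prod_i K(p^{a}u,m_i)$ vanishes unless $a\le v_p(m)$, only finitely many levels $a\in\{0,\dots,v_p(m)\}$ survive, and for each the sum over units collapses, after factoring out the $p^{(\ell-a)d/2}$ from $g_{\ell-a}(u)^{d}$ (which also produces a Legendre symbol $\left(\frac{u}{p}\right)^{d}$), into a single twisted exponential sum modulo $p^{\ell-a}$ of Kloosterman/Salié type,
\[
\sum_{u\in(\mathbb{Z}/p^{\ell-a}\mathbb{Z})^{\times}}\left(\frac{u}{p}\right)^{d}\exp\!\left(\frac{2\pi i}{p^{\ell-a}}\Big(-uj-\frac{\|m'\|}{4u}\Big)\right),\qquad m'=m/p^{a}.
\]
A direct valuation count then shows that the level-$a$ contribution is at most a constant times $q^{-\frac{d+2\ell-1}{2\ell}}\,p^{(a-(\ell-1))(d-1)/2}$, which (as $d\ge2$) is increasing in $a$ and largest at the top admissible level $a=v_p(m)\le\ell-1$, where it equals $q^{-\frac{d+2\ell-1}{2\ell}}$ up to the constant. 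Summing the at most $\ell$ surviving levels and absorbing the absolute constant into the polynomial factor produces the claimed bound $\ell(\ell+1)q^{-\frac{d+2\ell-1}{2\ell}}$.

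The crux, and the step I expect to be the main obstacle, is the square–root cancellation in this inner sum. A naive triangle inequality over the units gives only its trivial size $\asymp p^{\ell-a}$, which misses the target by exactly the factor $p^{(\ell-a)/2}$; at the top level $a=\ell-1$ this is a full $p^{1/2}$ that the polynomial factor $\ell(\ell+1)$ cannot absorb once $p$ is large, so genuine cancellation is unavoidable. I would therefore establish a bound $\lesssim p^{(\ell-a)/2}$ for these sums: for $\ell-a=1$ this is Weil's estimate for the classical Kloosterman (and Salié) sum, while for $\ell-a\ge2$ I would evaluate them directly by $p$-adic stationary phase, localizing the $u$-sum at the solutions of $u^{2}\equiv\|m'\|/(4j)$ and lifting them via Hensel, which yields the same savings with explicit constants. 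The degenerate cases, where $\|m'\|\equiv0$ modulo a high power of $p$ (which is precisely what happens when $a<v_p(m)$), reduce to Ramanujan-type sums over units and are even smaller, so they cause no difficulty.
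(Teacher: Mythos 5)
Your reduction is the same as the paper's: expand $S_j$ via additive characters, factor into one-dimensional quadratic Gauss sums $G(s,-m_i,q)$, stratify by the valuation of $s$ (noting the sum vanishes at level $a$ unless $p^a \mid m_i$ for all $i$), complete the square, and reduce to twisted Kloosterman/Sali\'e sums over the units of $\mathbb{Z}_{p^{\ell-a}}$, after which your level-by-level bookkeeping reproduces the paper's bound $q^{-\frac{d+2\ell-1}{2\ell}}p^{(a-(\ell-1))(d-1)/2}$ exactly. Where you genuinely diverge is in the proof of the crucial square-root cancellation for the inner sum. The paper cites Weil's bound for the Kloosterman case (even twist exponent), and for the Sali\'e case invokes the explicit evaluation $S(a,b;q)=\varepsilon_q q^{1/2}\left(\frac{b}{q}\right)\sum_{v^2\equiv ab}\chi(2v)$ together with a Hensel-lifting count of the solutions of $v^2\equiv ab$; the degenerate subcase $p\mid a$ (with $b=-j$ a unit) is handled by a bespoke argument showing $h(x)=a/x+bx$ is a bijection of $\mathbb{Z}_q^{\times}$ with inverse $g(y)\equiv y/b \pmod p$, which converts the sum into a generalized Gauss sum $\tau(\psi,\chi_b)$ of modulus at most $\sqrt{q}$. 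You instead propose Weil at the prime level and $p$-adic stationary phase for moduli $p^{\beta}$, $\beta\ge 2$; this is a standard and valid alternative, and it handles the paper's delicate degenerate case more transparently, since the critical-point equation $u^2\equiv ab^{-1}$ has no unit solutions when $p\mid a$, so those sums simply vanish for $\beta\ge 2$ (your ``Ramanujan-type'' description is loose but the conclusion is right). One cosmetic slip: the Gauss sum $g_{\ell-a}(u)$ carries the symbol $\left(\frac{u}{p^{\ell-a}}\right)=\left(\frac{u}{p}\right)^{\ell-a}$, so the twist in your displayed inner sum should be $\left(\frac{u}{p}\right)^{(\ell-a)d}$ rather than $\left(\frac{u}{p}\right)^{d}$ (in particular it is trivial when $(\ell-a)d$ is even); this does not affect your argument, since stationary phase absorbs any character twist mod $p$, but it does change which levels are Sali\'e versus Kloosterman.
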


With these Lemmas in tow, we are ready to proceed with the proof of Theorem \ref{Thm:distances}.  For $E \subset \mathbb{Z}_q^d$, we define the incidence function $\lambda_j = |\{(x,y) \in E \times E : \| x - y\| = j\}|$.  When $j \in \mathbb{Z}_q^{\times}$, we utilize Proposition \ref{Prop:Ftransform} and write
\begin{align*}
\lambda_j &= \sum_{x, y \in \mathbb{Z}_q^d} E(x) E(y) S_j(x - y)
\\
&= \sum_{x, y , m} E(x) E(y) \widehat{S_j}(m) \chi(m \cdot (x-y))
\\
&= q^{2d} \sum_{m} \left| \widehat{E}(m) \right|^2 \widehat{S_j}(m)
\\
&= q^{2d} \left| \widehat{E}(0)\right|^2 \widehat{S_j}(0) + q^{2d} \sum_{m \neq 0} \left| \widehat{E}(m)\right|^2 \widehat{S}_j(m)
\\
&= q^{-d} |E|^2 |S_j| + R_j.
\end{align*}
Lemma \ref{Lem:spheres} immediately implies that 
\[
\lambda_j = \frac{|E|^2}{q}(1 + o(1)) + R_j.
\]
By Lemma \ref{Lem:spheredecay}, we have
\begin{align*}
|R_j| &\leq q^{2d} \ell ( \ell + 1) q^{- \frac{d + 2 \ell - 1}{2 \ell}} \sum_{m} \left| \widehat{E}(m) \right|^2
\\
&=  \ell ( \ell + 1) |E| q^{d - \frac{d + 2 \ell - 1}{2 \ell}}
\\
&= \ell ( \ell + 1) |E| q^{\frac{(d-1)( 2 \ell - 1)}{2 \ell}}.
\end{align*}
Combining these estimates, we see that
\[
\lambda_j = \frac{|E|^2}{q}(1 + o(1)) + O\left(\ell (\ell + 1) |E| q^{\frac{(d-1)( 2 \ell - 1)}{2 \ell}} \right)
\]
which is positive whenever $|E| \gg \ell (\ell + 1) q^{\frac{d(2 \ell - 1) + 1}{2 \ell}}$, as claimed.

\section{Proof of Dot-Products Results (Theorem \ref{Thm:dotprod})}

Let $\chi(x) = \exp(2 \pi i x /q)$ as before.  Suppose $0 \leq n < m \leq log_p(q)$.  We will repeatedly rely on the following observation:

\begin{align*}
\sum_{z \in \mathbb{Z}^{\times}_{p^m}} \chi(p^n z) &= \sum_{z \in \mathbb{Z}_{p^m}} \chi(p^n z) - \sum_{z \in p \mathbb{Z}_{p^m}} \chi(p^n z)= I + II
\end{align*}
Now, the sum $I$ is zero by orthogonality as $\chi(p^n \cdot )$ is a nontrivial character on $\mathbb{Z}_{p^m}$ as $n < m$.  The sum $II$ is either zero or negative, depending on whether $n+1=m$ or not.  Either way, we will use the fact that the sum
\[
\sum_{z \in \mathbb{Z}^{\times}_{p^m}} \chi( p^n z)
\]
is nonpositive when $n < m$. In words, summing a nontrivial additive character of the type we consider over the group of multiplicative 
units always yields a nonpositive result.

For $E \subset \mathbb{Z}_q^d$, we define the incidence function $\nu(t) = \{(x,y) \in E \times E : x \cdot y = t\}$, and we show that $\nu(t) > 0$ for each unit $t \in \mathbb{Z}_q^{\times}$.  We write
\begin{align*}
\nu(t) &= q^{-1} \sum_{s \in \mathbb{Z}_q} \sum_{x,y \in E} \chi\left(s(x \cdot y)\right) \chi(-st)
\\
&= \nu_{\infty}(t) + \nu_0(t) + \nu_1(t) + \dots \nu_{\ell-1}(t),
\end{align*}
where
\[
\nu_i(t) = q^{-1}\mathop{\sum_{s \in \mathbb{Z}_q}}_{val_p(s) = i} \sum_{x,y \in E} \chi\left((s (x \cdot y)\right) \chi(-st).
\]
Recall that $val_p(x) = i$ if $p^i | x$, but $p^{i+1} \not$ \hskip-0.025in$|x$, and $val_p(0) = \infty$.  It is then plain to see that $\nu_{\infty}(t) = \frac{|E|^2}{q}$.  For the other values $i = 0 , \dots ,  \ell - 1$, note $s$ can be written in the form $s = p^i \overline{s}$, where $\overline{s}$ a uniquely determined unit 
in $\mathbb{Z}_{p^{\ell-i}}^{\times}$.  Also, viewing the term $\nu_i(t)$ as a sum in the $x$-variable, applying Cauchy-Schwarz, and extending the sum over $x \in E$ to the sum over $x \in \mathbb{Z}_q^d$, we see that
\begin{align*}
|\nu_i(t)|^2 &\leq |E| q^{-2} \sum_{x \in \mathbb{Z}_q^d} \sum_{y, y' \in E} \sum_{s, s' \in \mathbb{Z}_{p^{\ell-i}}^{\times}} \chi\left(p^i (sy - s'y')x\right) \chi\left(p^i t(s' - s)\right)
\\
&\leq |E| q^{d-2} \mathop{\mathop{\sum_{y, y' \in E}}_{p^i (sy - s'y') = \vec{0}}}_{s , s' \in \mathbb{Z}_{p^{\ell-i}}^{\times}} \chi\left(p^i t(s' - s)\right)
\end{align*}
We split the last sum into parts, $I$ and $II$, where $I$ corresponds to the sum over the terms where $s = s'$, and $II$ is over the set $(s,s')$, where $s \neq s'$.  We claim that term $II$ is a nonpositive quantity.  Accepting this for a moment, we see that
\begin{align*}
I &= |E|q^{d-2} \mathop{\sum_{s \in \mathbb{Z}_{p^{\ell-i}}^{\times}}}_{p^i s(y - y') = 0} E(y) E(y')
\\
&= |E| q^{d-2} p^{\ell-i}\left( 1 - \frac{1}{p}\right) \sum_{p^i y = p^i y'} E(y) E(y')
\\
&\leq |E| q^{d-2} p^{\ell-i} \sum_{\alpha \in \mathbb{Z}_{p^{\ell - i}}} |R_{E}(\alpha)|^2,
\end{align*}
where $R_E(\alpha) = \{ y \in E : y \equiv \alpha~(mod~p^{\ell-i})\}$.  Since the Kernel $K$ of the map
\[
K : \mathbb{Z}_{q}^d \to \mathbb{Z}_{p^{\ell-i}}^d
\]
has size $p^{id}$, it follows that 
\[
\sum_{\alpha \in \mathbb{Z}_{p^{\ell - i}}} |R_{E}(\alpha)|^2 \leq p^{id} \sum_{\alpha \in \mathbb{Z}_{p^{\ell - i}} } R_E(\alpha) = |E|p^{id}.
\]
Putting everything together, since the term $II$ is nonpositive, it follows that
\begin{align*}
|\nu_i(t)|^2 \leq I \leq |E| q^{d-2} p^{\ell-i} \cdot |E| p^{id}
\end{align*}
from which it immediately follows that
\[
|\nu_i(t)| \leq |E| q^{\frac{d-1}{2} \left(1 + \frac{i}{\ell} \right)}.
\]
Therefore, for each $t \in \mathbb{Z}_q^{\times}$, we have
\[
\nu(t) = \frac{|E|^2}{q} +\underbrace{\nu_0(t) + \dots \nu_{\ell-1}(t)}_{:= R(t)},
\]
where $|R(t)| \leq \ell |E| q^{\frac{d-1}{2}\left(2 - \frac{1}{\ell} \right)}$.  Therefore, $\nu(t) > 0$ (and hence $t \in \prod(E)$) whenever we have $|E| \gg \ell q^{\left(\frac{2\ell-1}{2\ell}\right)d + \frac{1}{2\ell}}$, as claimed.  It remains, however, to show that the term $II$ appearing in the bound for $|\nu_i(t)|^2$ is  nonpositive.  Recall that
\begin{align*}
II &= |E| q^{d-2} \sum_{y, y' \in E}\mathop{\mathop{\sum_{s, s' \in \mathbb{Z}^{\times}_{p^{\ell - i}}}}_{p^i(sy - s'y') = 0}}_{{s \neq s'}} \chi\left(p^i t (s' - s)\right)
\\
&= |E| q^{d-2} \sum_{y, y' \in E} \mathop{\mathop{\sum_{p^i (b(ay - y')) = 0}}_{a, b \in \mathbb{Z}^{\times}_{p^{\ell - i}}}}_{a \neq 1} \chi\left(p^i t (b (1-a))\right).
\end{align*}
We break up the sum $II$ into two additional pieces according to whether $1-a \in \mathbb{Z}_{p^{\ell - i }}\setminus \{ 0 \}$ is a unit or not:
\begin{align*}
II_A &= |E|q^{d-2}\sum_{y, y' \in E} \mathop{\mathop{\sum_{p^i (b(ay - y')) = 0}}_{a, b \in \mathbb{Z}^{\times}_{p^{\ell - i}}}}_{1-a \in \mathbb{Z}^{\times}_{p^{\ell - i}}} \chi\left(p^i t (b (1-a))\right)
\\
II_B &= |E|q^{d-2}\sum_{y, y' \in E} \mathop{\mathop{\sum_{p^i (b(ay - y')) = 0}}_{a, b \in \mathbb{Z}^{\times}_{p^{\ell - i}}}}_{1-a \notin \mathbb{Z}^{\times}_{p^{\ell - i}}} \chi\left(p^i t (b (1-a))\right)
\end{align*}
Note that the condition $p^i b(ay - y') = 0$ implies that $ay = y'$ in $\mathbb{Z}_{p^{\ell-i}}$, since $b$ is a unit in $\mathbb{Z}_{p^{\ell - i}}$.  Thereby summing in $b$ and applying orthogonality, we get that
\begin{align*}
II_A &= |E|q^{d-2}\sum_{y, y' \in E} \mathop{\mathop{\sum_{p^i (b(ay - y')) = 0}}_{a, b \in \mathbb{Z}^{\times}_{p^{\ell - i}}}}_{1-a \in \mathbb{Z}^{\times}_{p^{\ell - i}}} \chi\left(p^i t (b (1-a))\right)
\end{align*} 
is a nonpositive real quantity as the inner sum over $b$ is the sum of a nontrivial additive character over the group of multiplicative units.  Also, note that if $a \in \mathbb{Z}_{p^{\ell - i}}^{\times}$, but $1 - a \notin \mathbb{Z}_{p^{\ell - i}}^{\times}$, then $1 - a = p^js$, for some $0 < j < \ell - i$, where $s \in \mathbb{Z}^{\times}_{p^{\ell  - i -j}}$.  Thus, we can write

\[
II_B = |E|q^{d-2} \mathop{\mathop{\sum_{y, y' \in E}}_{p^ib((1 - p^j s )y - y') = 0}}_{b \in \mathbb{Z}^{\times}_{p^\ell - {i}}}\sum_{j = 1}^{\ell - i - 1} L_j, 
\]
where we put
\begin{align*}
L_j &:= \sum_{s \in \mathbb{Z}^{\times}_{p^{\ell - i - j}}} \chi(p^{i + j}tbs).
\end{align*}
Applying orthogonality and summing in the variable $s$ (noting $tb$ is a unit), we see that $L_j$ is either $-1$ or $0$, depending on whether $\ell - i - j = 1$ or not.  Therefore, $II_B$ is a nonpositive term, and the claim, hence the proof, follows.

\subsection{Sharpness results (Proof of Theorem \ref{Thm:examples})}
\label{sec:examples} 

In this section we provide examples to prove theorem~\ref{Thm:examples} which shows that the dot product results stated in theorem~\ref{Thm:dotprod} 
are sharp.

\begin{definition}
Let $\hat{v} \cdot \hat{w} = \sum_{i=1}^d v_iw_i$ for $\hat{v}, \hat{w} \in \mathbb{F}_p^d$.

A subspace $\mathfrak{L} \subseteq \mathbb{F}_p^d$ is Lagrangian if 
$\hat{v} \cdot \hat{w} = 0$ for all $\hat{v}, \hat{w} \in \mathfrak{L}$.
\end{definition}

It was shown in \cite{HIS07} that for $d \geq 3$ odd, $\mathbb{F}_p^d$ possesses a Lagrangian subspace of dimension 
$\frac{d-1}{2}$ while for $d \geq 4$ even, $\mathbb{F}_p^d$ possesses a Lagrangian subspace of dimension 
$\frac{d}{2}$. 

Under the projection homomorphism $\pi: \mathbb{Z}_{p^{\ell}}^d \to \mathbb{F}_p^d$ for $d \geq 3$, let 
$E=\pi^{-1} (\mathfrak{L})$ i.e., the full lift of a Lagrangian subspace of $\mathbb{F}_p^d$ to $\mathbb{Z}_q^d$ where $q=p^{\ell}$.

Now note
\[
|E| = p^{(\ell-1)d}|\mathfrak{L}| = 
\left\{
\begin{array}{ll}
q^{\frac{(2\ell-1)d}{2\ell} - \frac{1}{2\ell}} & \text{ if } d \geq 3 \text{ is odd }\\
q^{\frac{(2\ell-1)d}{2\ell}} & \text{ if } d \geq 4 \text{ is even }
\end{array}
\right.
\]

However, $\prod(E) = \{ \hat{v} \cdot \hat{w} : \hat{v}, \hat{w} \in E \}$ projects under $\pi$ to $0$ in $\mathbb{F}_p$ as $\mathfrak{L}=\pi(E)$ is Lagrangian. Thus $\prod(E) \subseteq p\mathbb{Z}_{p^{\ell}} = \text{ nonunits of } \mathbb{Z}_q$. In particular, $|\prod(E)| \leq p^{\ell-1} = o(p^{\ell})$. Furthermore since $||x-y|| = x \cdot x + y \cdot y - 2 x \cdot y$ for all $x, y \in \mathbb{Z}_{p^{\ell}}^d$, we see that $\Delta(E) \subseteq p\mathbb{Z}_{p^{\ell}}$ also and so $\Delta(E)$ also is a subset of the nonunits of $\mathbb{Z}_{q}$.

Thus if we set $b=1$ when $d$ even and $b=b(p)=p^{-\frac{1}{2}}$ when $d$ odd we have proven 
Theorem~\ref{Thm:examples} and hence shown the sharpness of the dot product and distance bounds as desired.

\section{Proofs of Preliminary Results}\label{Sec:prelim}

\subsection{Gauss Sums and Related Results}
We need the following well known results.
\begin{definition}[Quadratic Gauss sums]
For positive integers $a, b, n$, we denote by $G(a,b,n)$ the following sum
\[
G(a,b,n) := \sum_{x \in \mathbb{Z}_n} \chi(ax^2 + bx).
\]
where $\chi(x) = e^{2 \pi i x /n}$.  For convenience, we denote the sum $G(a,0,n)$ by $G(a,n)$.
\end{definition}
\begin{proposition}[\cite{IK04}]\label{Prop:quadgausssum}
Let $\chi(x) = e^{2 \pi i x /n}$.  For $a \in \mathbb{Z}_n$ with $(a,n) = 1$, we have
\[
G(a,n) = \sum_{x \in \mathbb{Z}_n} \chi(a x^2) = 
\left\{
\begin{array}{lcl}
\varepsilon_n \left( \frac{a}{n} \right) \sqrt{n}  &   & n \equiv~1~mod~2  \\
0 &   & n \equiv~2~mod~4  \\
(1+i)\varepsilon_a^{-1} \left( \frac{n}{a} \right) \sqrt{n}  &   &   n \equiv~0~mod~4~\&~a \equiv ~1~mod~2
\end{array}
\right.
\]
where $\left(\frac{\cdot }{c}\right)$ denotes the Jacobi symbol and
\[
\varepsilon_n = 
\left\{
\begin{array}{lc}
1    & n \equiv 1~mod~4  \\
i     &  n \equiv 3~mod~4
\end{array}
\right.
\]
Furthermore, for general values of $a \in \mathbb{Z}_n$, we have
\[
G(a,b,n) = \left\{
\begin{array}{lc}
(a,n)G\left(\frac{a}{(a,n)}, \frac{b}{(a,n)},\frac{n}{(a,n)} \right) & (a,n) | b \\
0 & otherwise
\end{array}\right.
\]
\end{proposition}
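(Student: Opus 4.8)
The plan is to treat this as the classical evaluation of quadratic Gauss sums, reducing the general sum $G(a,b,n)$ to a single core family $G(1,n)$ whose value carries all the arithmetic, and to determine the sign at the very end. Throughout write $e(\theta) = e^{2\pi i \theta}$, so that $G(a,b,n) = \sum_{x \bmod n} e((ax^2 + bx)/n)$. I would proceed in four stages: (i) the reduction formula together with completion of the square, to strip off the common factor $(a,n)$ and the linear term; (ii) multiplicativity in $n$ via the Chinese Remainder Theorem, reducing to prime-power moduli; (iii) evaluation at prime powers, split into odd $p$ and $p = 2$; and (iv) the determination of the exact sign, which is the only genuinely hard input.

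For stage (i), set $g = (a,n)$ and write $x = r + (n/g)s$ with $r \bmod n/g$ and $s \bmod g$. A direct computation shows $ax^2 \equiv ar^2 \pmod n$, so the sum factors as $\sum_r e((ar^2 + br)/n)\sum_{s \bmod g} e(bs/g)$; the inner sum equals $g$ when $g \mid b$ and vanishes otherwise, yielding $G(a,b,n) = (a,n)\,G(a/g, b/g, n/g)$ or $0$. Once $(a,n) = 1$ and $2$ is invertible modulo $n$ (the odd case), completing the square via $ax^2 + bx = a(x + \overline{2a}\,b)^2 - \overline{4a}\,b^2$ gives $G(a,b,n) = e(-\overline{4a}\,b^2/n)\,G(a,n)$, so it remains to evaluate $G(a,n)$ with $(a,n) = 1$.

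For stage (ii), if $n = n_1 n_2$ with $(n_1, n_2) = 1$ then $\tfrac{1}{n_1 n_2} \equiv \tfrac{\overline{n_2}}{n_1} + \tfrac{\overline{n_1}}{n_2} \pmod 1$, and since $x \bmod n_1$ and $x \bmod n_2$ vary independently the sum factors as $G(a, n) = G(a\overline{n_2}, n_1)\,G(a\overline{n_1}, n_2)$; this reduces everything to prime powers. For an odd prime I would obtain the base case $G(a,p) = \left(\tfrac{a}{p}\right)G(1,p)$ by counting square roots, namely $\sum_{x} e(ax^2/p) = \sum_{t}(1 + \chi_2(t))e(at/p) = \left(\tfrac{a}{p}\right)\tau(\chi_2)$ with $\chi_2 = \left(\tfrac{\cdot}{p}\right)$ the Legendre symbol and $\tau(\chi_2) = G(1,p)$; lifting to $p^k$ and the two even cases ($n \equiv 2 \pmod 4$ giving $0$, and $n \equiv 0 \pmod 4$ giving the $(1+i)$ factor) follow by further grouping and direct evaluation at the moduli $2$ and $4$. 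Keeping the Jacobi-symbol bookkeeping consistent across the factorization is precisely the quadratic reciprocity content and must be tracked with care.

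The main obstacle is stage (iv): fixing the sign, i.e. $G(1,p) = \sqrt{p}$ for $p \equiv 1 \pmod 4$ and $i\sqrt{p}$ for $p \equiv 3 \pmod 4$ (the factor $\varepsilon_n$). The modulus $|G(1,p)| = \sqrt{p}$ is an immediate orthogonality computation, but the sign is Gauss's theorem and cannot be read off from $|G|^2$. I would settle it by Schur's eigenvalue method: the matrix $F = (e(jk/n))_{0 \le j,k < n}$ satisfies $F^2 = nP$ with $P$ the reflection $j \mapsto -j$, hence $F^4 = n^2 I$ and the eigenvalues of $F$ lie in $\sqrt{n}\{1,-1,i,-i\}$; their multiplicities are pinned down by $\operatorname{tr}(F^2)$ together with $\det F$ (computed directly as a Vandermonde determinant), after which $\operatorname{tr}(F) = \sum_j e(j^2/n) = G(1,n)$ is simply read off, fixing the sign for every $n$ at once. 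Given the delicacy of this last step, and since the statement is entirely standard and used only as a black box below, it is equally reasonable to cite \cite{IK04} for the final evaluation.
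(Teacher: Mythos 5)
The paper never proves this proposition --- it is imported from Iwaniec--Kowalski \cite{IK04} and used as a black box (and only ever invoked with odd prime-power modulus, in the proofs of Lemmas \ref{Lem:spheres} and \ref{Lem:spheredecay}), so your closing fallback of simply citing \cite{IK04} is precisely what the authors do. That said, your outline is a correct reconstruction of the classical proof, and the pieces you actually carry out are sound: the substitution $x = r + (n/g)s$ with $g = (a,n)$ does give $ax^2 \equiv ar^2 \pmod{n}$ (both cross and square terms pick up a full factor of $n$ since $g \mid a$), yielding the factor $(a,n)$ and the vanishing when $g \nmid b$; your completion of the square is exactly the paper's Proposition~\ref{Prop:compsquare}; the factorization $G(a, n_1 n_2) = G(a\overline{n_2}, n_1)\,G(a\overline{n_1}, n_2)$ from $\tfrac{1}{n_1 n_2} \equiv \tfrac{\overline{n_2}}{n_1} + \tfrac{\overline{n_1}}{n_2} \pmod{1}$ is correct; the square-root count $1 + \chi_2(t)$ gives $G(a,p) = \left(\frac{a}{p}\right) G(1,p)$; and Schur's eigenvalue argument ($F^2 = nP$, hence $F^4 = n^2 I$, with multiplicities pinned by $\operatorname{tr}(F^2) = n \operatorname{tr}(P)$, $|\operatorname{tr} F|^2 = n$, and the phase of the Vandermonde determinant) is the standard route to the sign $\varepsilon_n$, which, as you say, cannot be extracted from $|G|^2$ alone.

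Two points deserve more care than your sketch gives them. First, the logical order around reciprocity: gluing the symbols across the CRT step, i.e.\ verifying $\varepsilon_{n_1 n_2}\left(\frac{a}{n_1 n_2}\right) = \varepsilon_{n_1}\varepsilon_{n_2}\left(\frac{a\overline{n_2}}{n_1}\right)\left(\frac{a\overline{n_1}}{n_2}\right)$, is equivalent to Jacobi reciprocity, and the Gauss-sum evaluation is itself the classical engine for \emph{proving} reciprocity; to avoid circularity you should either take reciprocity as known input or observe that Schur's method evaluates $G(1,n)$ for every $n$ simultaneously, after which reciprocity comes out of the argument rather than going in. Second, the branch $n \equiv 0 \pmod{4}$ is thinner than ``direct evaluation at the moduli $2$ and $4$'' suggests: the $2$-power recursion does not reduce the modulus as cleanly as in the odd case (odd squares are $\equiv 1 \pmod 8$ and the sum over odd $x$ must be treated separately), and the factor $(1+i)\varepsilon_a^{-1}\left(\frac{n}{a}\right)$ requires its own induction. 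Since the paper applies the proposition only with $n$ an odd prime power, this thin spot is immaterial here, and deferring it to \cite{IK04} --- as you propose and as the paper itself does --- is entirely appropriate.
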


\begin{proposition}\label{Prop:compsquare}
Suppose that $a \in \mathbb{Z}_n^{\times}$, where $n$ is odd.  Then,
\[
G(a,b,n) = G(a,n) \chi(- b^2/4a).
\]
\end{proposition}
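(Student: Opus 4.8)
The plan is to carry out the classical completing-the-square maneuver, which works cleanly here precisely because the hypotheses $a \in \mathbb{Z}_n^{\times}$ and $n$ odd guarantee that the arithmetic involved is invertible. First I would record that since $n$ is odd, $2$ is a unit in $\mathbb{Z}_n$, and since $a$ is a unit by hypothesis, the products $2a$ and $4a$ are also units; hence $b/(2a)$ and $b^2/(4a)$ denote well-defined elements of $\mathbb{Z}_n$, and the expression $\chi(-b^2/4a)$ appearing in the statement is unambiguous.

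With invertibility in hand, the algebraic identity is immediate: in $\mathbb{Z}_n$ one has
\[
a x^2 + b x = a\left( x + \frac{b}{2a}\right)^2 - \frac{b^2}{4a},
\]
which I would verify by expanding the right-hand side, the cross term $2 \cdot a \cdot x \cdot b/(2a) = bx$ recovering the linear term using that the factor of $2$ cancels. Substituting this into the definition of $G(a,b,n)$ from the preceding definition and pulling the constant $\chi(-b^2/4a)$ out of the sum gives
\[
G(a,b,n) = \chi\!\left(-\frac{b^2}{4a}\right) \sum_{x \in \mathbb{Z}_n} \chi\!\left( a\left(x + \frac{b}{2a}\right)^2\right).
\]

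The final step is the change of variables $y = x + b/(2a)$. Since translation by the fixed element $b/(2a) \in \mathbb{Z}_n$ is a bijection of $\mathbb{Z}_n$ onto itself, as $x$ runs over $\mathbb{Z}_n$ so does $y$, and therefore the remaining sum equals $\sum_{y \in \mathbb{Z}_n} \chi(a y^2) = G(a,n)$ by the definition of $G(a,n)$. Combining these observations yields $G(a,b,n) = G(a,n)\,\chi(-b^2/4a)$, as claimed.

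There is no serious obstacle here; the entire content is the observation that the denominators $2a$ and $4a$ are invertible in $\mathbb{Z}_n$ under the stated hypotheses, which is exactly what legitimizes both the completion of the square and the well-definedness of $\chi(-b^2/4a)$. The one point I would be careful to note is that the argument is insensitive to the particular representative chosen for $b/(2a)$ in $\mathbb{Z}_n$, since $\chi$ is a character on $\mathbb{Z}_n$ and the translation remains a bijection regardless of the representative.
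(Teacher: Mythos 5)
Your proof is correct and is essentially the paper's own argument: both complete the square using that $2a$ is invertible in $\mathbb{Z}_n$ (since $n$ is odd and $a$ is a unit), pull out the constant $\chi(-b^2/4a)$, and conclude via the translation change of variables $x \mapsto x - b(2a)^{-1}$. Your write-up merely spells out the well-definedness of $b/(2a)$ and $b^2/(4a)$ more explicitly, which the paper leaves implicit.
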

\begin{proof}
Since $a$ is a unit, we have 
\begin{align*}
G(a,b,n) = \sum_{x \in \mathbb{Z}_n} \chi(a(x^2 + ba^{-1}x)) &= \sum_{x \in \mathbb{Z}_n} \chi\left(a (x^2 + ba^{-1}x + b^2/4a^2)\right) \chi(-b^2/4a)
\\
&= \sum_{x \in \mathbb{Z}_n} \chi(a x^2) \chi(- b^2/4a),
\end{align*}
by the change of variables $x \mapsto x - b(2a)^{-1}$.
\end{proof}

\begin{definition}[Generalized Gauss Sum]\label{Def:Gausssum}Let $\psi$ denote a Dirichlet (multiplicative and extended by zero on nonunits) character mod $n$ and $\chi_a(x) = e^{2 \pi i ax/n}$, an additive character mod $n$.  Then, we set
\[
\tau(\psi, \chi_a) = \sum_{x \in \mathbb{Z}_n} \psi(x) \chi_a(x).
\]
When $a = 1$, we simply write $\tau(\psi, \chi_1) = \tau(\psi)$.
\end{definition}
\begin{proposition}
Suppose $\psi$ is a Dirichlet mod $q$ and $(a,q) = 1$.  Then, 
\[
\tau(\psi, \chi_a) = \overline{\psi(a)} \tau(\psi).
\]
\begin{proof}
Since $\psi(a) \overline{\psi(a)} = 1$, when $(a,q) = 1$, we have
\begin{align*}
\tau(\psi, \chi_a) =  \overline{\psi(a)} \sum_{x \in \mathbb{Z}_q} \psi(ax) \chi_1(ax) = \overline{\psi(a)} \sum_{y \in \mathbb{Z}_q} \psi(y) \chi_1(y) = \overline{\psi(a)} \tau(\psi).
\end{align*}
\end{proof}
\end{proposition}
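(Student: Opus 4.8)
The plan is to exploit the fact that $(a,q)=1$ means $a \in \mathbb{Z}_q^{\times}$, so that multiplication by $a$ is a bijection of $\mathbb{Z}_q$ which preserves the set of units, combined with the multiplicativity of the Dirichlet character $\psi$. First I would unwind the definitions: by Definition \ref{Def:Gausssum}, $\tau(\psi,\chi_a) = \sum_{x \in \mathbb{Z}_q}\psi(x)\chi_a(x)$, and since $\chi_a(x) = e^{2\pi i a x/q} = \chi_1(ax)$, the twisted additive character is simply $\chi_1$ evaluated at $ax$. This reframes the whole sum in terms of the argument $ax$.

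The key algebraic step is to rewrite $\psi(x)$ in terms of $\psi(ax)$. Because $a$ is a unit, I can write $x = a^{-1}(ax)$ and apply multiplicativity to obtain $\psi(x) = \psi(a^{-1})\psi(ax)$. Since $\psi$ takes values among the roots of unity on the units, we have $|\psi(a)| = 1$, hence $\psi(a^{-1}) = \psi(a)^{-1} = \overline{\psi(a)}$. I would stress that the resulting identity $\psi(x) = \overline{\psi(a)}\,\psi(ax)$ in fact holds termwise for \emph{every} $x$: on units both sides are defined and agree, while on non-units both $\psi(x)$ and $\psi(ax)$ vanish under the convention that $\psi$ is extended by zero off the units, using that multiplication by the unit $a$ carries non-units to non-units.

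With this in hand I would factor the constant $\overline{\psi(a)}$ out of the sum and carry out the change of variables $y = ax$. As $a$ is a unit, the map $x \mapsto ax$ permutes $\mathbb{Z}_q$, so $\sum_{x}\psi(ax)\chi_1(ax) = \sum_{y}\psi(y)\chi_1(y) = \tau(\psi)$. Reassembling the pieces gives $\tau(\psi,\chi_a) = \overline{\psi(a)}\,\tau(\psi)$, which is the claim.

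I do not expect a genuine obstacle here, as this is the standard normalization identity for Gauss sums under a unit twist of the additive character. The only points deserving a moment of care are the justification that $\psi(a^{-1}) = \overline{\psi(a)}$, which rests on $\psi$ being unitary on $\mathbb{Z}_q^{\times}$, and the bookkeeping that both the change of variables and the multiplicativity identity interact correctly with the extension-by-zero convention on non-units, where both sides simply vanish. Once these conventions are fixed, the computation is direct.
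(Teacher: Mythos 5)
Your proposal is correct and follows essentially the same route as the paper: insert $\overline{\psi(a)}\psi(a)=1$ (equivalently, $\psi(x)=\overline{\psi(a)}\psi(ax)$ by multiplicativity), then change variables $y=ax$, which permutes $\mathbb{Z}_q$ since $a$ is a unit. Your extra care about the extension-by-zero convention on non-units is a valid refinement of the same argument, not a different approach.
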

\begin{proposition}[\cite{IK04}]
Let $\psi$ denote a Dirichlet character mod $n$ which is induced by a primitive character $\psi^*$ modulo $n^*$.  Then,
\begin{equation}
\tau(\psi) = \mu\left(\frac{n}{n^*}\right) \psi^*\left(\frac{n}{n^*}\right) \tau(\psi^*).
\end{equation}
Here, $\mu$ is the M\" obius function: 
\[ \mu(n) = 
\left\{
\begin{array}{ccl}
1 & & n = 1 \\
0 & & n \text{ is not squarefree}\\
(-1)^k&  & n = p_1\dots p_k
\end{array}
\right.
\]
Furthermore, if $\psi$ is a primitive Dirichlet character modulo $n$, then,
\begin{equation}
|\tau(\psi)| \leq \sqrt{n}
\end{equation}
\end{proposition}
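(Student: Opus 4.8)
The plan is to treat the two assertions separately, deriving the induction relation first and then the bound, both from orthogonality of additive characters together with the twist formula $\tau(\psi,\chi_a) = \overline{\psi(a)}\tau(\psi)$ of the previous proposition. Throughout I write $\phi(n) = |\mathbb{Z}_n^{\times}|$.

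For the induction relation, set $r = n/n^*$ and recall that $\psi(a) = \psi^*(a)$ when $(a,n)=1$ and $\psi(a)=0$ otherwise. Since $\psi^*(a)$ already vanishes unless $(a,n^*)=1$, and since $(a,n)=1$ is equivalent to $(a,n^*)=1$ together with $(a,r)=1$ (as $n = n^* r$), I would first rewrite
\[
\tau(\psi) = \sum_{\substack{a \bmod n \\ (a,r)=1}} \psi^*(a)\,\chi_1(a),
\]
and then detect the condition $(a,r)=1$ by Möbius inversion, $\mathbf{1}_{(a,r)=1} = \sum_{d \mid (a,r)} \mu(d)$. Substituting $a = d a'$ in each resulting sum and factoring $\psi^*(a) = \psi^*(d)\psi^*(a')$ recasts $\tau(\psi)$ as $\sum_{d \mid r}\mu(d)\psi^*(d)\,G_d$, where $G_d = \sum_{a' \bmod (n/d)}\psi^*(a')\,e^{2\pi i a'/(n/d)}$ is the Gauss sum of $\psi^*$ regarded as a periodic function modulo $n/d$. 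The decisive computation is to evaluate $G_d$: writing $n/d = n^* s$ with $s = r/d$ and splitting $a' = c + n^* t$ with $c \bmod n^*$ and $t \bmod s$, the exponential factors as a term depending only on $c$ times $e^{2\pi i t/s}$, and the sum over $t$ vanishes unless $s=1$. Hence $G_d = 0$ for every $d \mid r$ with $d < r$, while $G_r = \tau(\psi^*)$, leaving exactly $\tau(\psi) = \mu(r)\psi^*(r)\tau(\psi^*)$.

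For the bound, the idea is to compute $\sum_{a \bmod n}|\tau(\psi,\chi_a)|^2$ in two ways. Expanding the square and summing the additive character over $a$ by orthogonality collapses the sum to the diagonal $x \equiv y$, giving $n\sum_x|\psi(x)|^2 = n\,\phi(n)$. On the other hand, the twist formula gives $\tau(\psi,\chi_a) = \overline{\psi(a)}\tau(\psi)$, so the same sum equals $|\tau(\psi)|^2\sum_a|\psi(a)|^2 = |\tau(\psi)|^2\phi(n)$; comparing yields $|\tau(\psi)|^2 = n$, which is in fact stronger than the claimed inequality. The subtle point is that the twist formula was proved only for $(a,n)=1$, whereas I need it for all $a$. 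When $(a,n)>1$ the right-hand side vanishes since $\overline{\psi(a)}=0$, so the real content is the lemma that $\tau(\psi,\chi_a)=\sum_x \psi(x)\chi_a(x)=0$ whenever $(a,n)>1$. This is precisely where primitivity enters and is the main obstacle.

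To prove that lemma I would write $g=(a,n)$, $n = g n_1$, $a = g a_1$ with $(a_1,n_1)=1$, so that $\chi_a(x)$ depends only on $x \bmod n_1$ with $n_1$ a proper divisor of $n$. Because $\psi$ has conductor $n$, it is not induced by any character modulo $n_1$, so there exists a unit $u \equiv 1 \pmod{n_1}$ with $\psi(u) \neq 1$. Replacing $x$ by $ux$ permutes the residues and leaves $\chi_a(x)$ unchanged (since $u \equiv 1 \pmod{n_1}$) while multiplying $\psi(x)$ by $\psi(u)$; hence the sum equals $\psi(u)$ times itself and must vanish. This establishes the lemma and, with it, both assertions of the proposition.
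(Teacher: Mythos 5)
Your proof is correct, but there is nothing in the paper to compare it against: the paper states this proposition as a quoted result from Iwaniec--Kowalski \cite{IK04} and gives no proof, using it as a black box (only Corollary \ref{Cor:gengausssum} is extracted from it). What you have written is essentially the standard textbook argument, and both halves check out. For the induction relation, the M\"obius detection of $(a,r)=1$ followed by the splitting $a' = c + n^* t$ to kill $G_d$ for every $d < r$ is exactly the classical computation; note that your algebra never actually uses primitivity of $\psi^*$ --- it enters only through the definition of $n^*$ as the conductor --- which is fine. For the bound, you correctly identify the crux: extending the twist formula $\tau(\psi,\chi_a) = \overline{\psi(a)}\tau(\psi)$ to all $a$, i.e.\ the separability of primitive characters, which reduces to showing $\tau(\psi,\chi_a)=0$ when $(a,n)>1$. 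Your argument via a unit $u \equiv 1 \pmod{n_1}$ with $\psi(u) \neq 1$ is the right one; the one step you state without proof is the characterization of the conductor you invoke (``$\psi$ is induced by a character mod $n_1$ iff $\psi$ is trivial on units $\equiv 1 \pmod{n_1}$''), whose nontrivial direction rests on surjectivity of the reduction $(\mathbb{Z}/n\mathbb{Z})^{\times} \to (\mathbb{Z}/n_1\mathbb{Z})^{\times}$; this is standard but worth flagging. Finally, your Parseval double count yields $|\tau(\psi)|^2 = n$ for primitive $\psi$, i.e.\ equality rather than the stated inequality $|\tau(\psi)| \leq \sqrt{n}$ --- this is indeed the true state of affairs, and the paper's weaker formulation is simply all it needs for Corollary \ref{Cor:gengausssum}.
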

\begin{corollary}\label{Cor:gengausssum}
Given any Dirichlet character $\psi$, and $\chi_a(x) = e^{2 \pi i a x / n}$, we have
\[
|\tau(\psi, \chi_a)| \leq \sqrt{n}.
\]
\end{corollary}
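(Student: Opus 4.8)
The plan is to reduce the bound to the two facts already established: the twisting identity $\tau(\psi, \chi_a) = \overline{\psi(a)}\tau(\psi)$ for $(a,n) = 1$, and the primitive bound $|\tau(\psi^*)| \leq \sqrt{n^*}$. For the first reduction, pass from $\psi$ to the primitive character $\psi^*$ mod $n^*$ that induces it. By the cited relation $\tau(\psi) = \mu(n/n^*)\psi^*(n/n^*)\tau(\psi^*)$, together with $|\mu| \leq 1$, $|\psi^*| \leq 1$, and $n^* \mid n$, one gets $|\tau(\psi)| \leq |\tau(\psi^*)| \leq \sqrt{n^*} \leq \sqrt{n}$. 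Combined with the twisting identity this settles every frequency $a$ coprime to $n$, for an \emph{arbitrary} Dirichlet character $\psi$, since there $|\overline{\psi(a)}| = 1$.

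The substantive case is $d = \gcd(a,n) > 1$. By the Chinese Remainder Theorem and the multiplicativity of $\tau$ under coprime factorisations of $n$, it suffices to treat $n = p^{\ell}$ a prime power (which is in any case the setting of the paper), where the fibre analysis below is clean. Write $a = d a'$ with $(a', n') = 1$ and $n' = n/d$, and collapse the defining sum along the fibres of the reduction $\mathbb{Z}_n \to \mathbb{Z}_{n'}$: grouping $x = y + n'k$ for $k = 0, \dots, d-1$, the additive character $\chi_a$ is constant on each fibre, so $\tau(\psi, \chi_a) = \sum_{y \bmod n'} c(y)\, e^{2\pi i a' y / n'}$ with $c(y) = \sum_{k=0}^{d-1} \psi(y + n'k)$. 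The payoff is that $a'$ is now a unit mod $n'$, so once $c$ is understood the unit case applies at the smaller modulus $n'$.

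The crux is to evaluate the fibre sum. I would show $c(y) = 0$ unless $(y, p) = 1$ (any prime dividing $\gcd(y, n')$ divides every $y + n'k$ and hence kills $\psi$), and that for $(y,p)=1$ one has $c(y) = \psi(y) \sum_{u \in U} \psi(u)$, where $U$ is the subgroup of $(\mathbb{Z}/n)^{\times}$ of elements $\equiv 1 \pmod{n'}$, of order $d$. By orthogonality of $\psi$ on $U$, this inner sum is $d$ if $\psi|_U$ is trivial, i.e. if the conductor $n^*$ divides $n'$, and $0$ otherwise. When $n^* \nmid n'$ we conclude $\tau(\psi, \chi_a) = 0$ and the bound is immediate; when $n^* \mid n'$ we get $c(y) = d\,\overline{\psi}(y)$ for the induced character $\overline{\psi}$ mod $n'$, so that $\tau(\psi, \chi_a) = d\, \tau(\overline{\psi}, \chi_{a'}^{(n')})$, which the unit case bounds by $d\sqrt{n^*}$.

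The main obstacle is precisely this last multiplicity: collapsing $d$ fibres contributes a factor $d$, and obtaining $\sqrt{n}$ requires verifying that it is absorbed by the drop in modulus, i.e. that $d\sqrt{n^*} \leq \sqrt{n}$, equivalently the inequality $d\, n^* \leq n'$ (together with the vanishing of $\tau(\overline{\psi})$ unless $n^* \in \{n', n'/p\}$ forced by the Möbius factor). This inequality, matching the size of $U$ against the conductor and the relation $n = d n'$, is the heart of the matter and the only step that is more than a direct citation of the earlier propositions; I would organise the whole argument as an induction on $n$, the non-unit case feeding back into the strictly smaller modulus $n'$. I expect the sharpest form of the estimate, and hence the precise hypotheses on $\psi$ and $a$ under which the clean bound $\sqrt{n}$ holds without loss, to be dictated entirely by this conductor-versus-multiplicity comparison.
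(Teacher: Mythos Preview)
Your first paragraph is exactly what the paper intends: the corollary is stated without proof and is meant to follow immediately from the two preceding propositions, namely $\tau(\psi,\chi_a)=\overline{\psi(a)}\tau(\psi)$ for $(a,n)=1$ together with $\tau(\psi)=\mu(n/n^*)\psi^*(n/n^*)\tau(\psi^*)$ and $|\tau(\psi^*)|\le\sqrt{n^*}$. Both places where the paper invokes the corollary (in the proofs of Lemma~\ref{Lem:spheres} and Lemma~\ref{Lem:spheredecay}) have the additive frequency a unit, so nothing beyond your first paragraph is needed or used.

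Your extended analysis of the case $d=\gcd(a,n)>1$ is not only unnecessary, it is aimed at a statement that is \emph{false} in that generality. The inequality $d\,n^*\le n'$ that you correctly isolate as the crux can fail, and when it does the bound itself fails. Concretely, take $n=p^2$, $a=p$, and let $\psi$ be the character mod $p^2$ induced by any primitive character $\psi^*$ mod $p$. Writing $x=y+pk$ with $1\le y\le p-1$ and $0\le k\le p-1$, one gets
\[
\tau(\psi,\chi_p)=\sum_{y=1}^{p-1}\sum_{k=0}^{p-1}\psi^*(y)\,e^{2\pi i y/p}=p\,\tau(\psi^*),
\]
so $|\tau(\psi,\chi_p)|=p\sqrt{p}=p^{3/2}>p=\sqrt{n}$. (Even the principal character gives a failure at $n=p^3$, $a=p^2$, where $|\tau|=p^2>p^{3/2}$.) So your closing remark is exactly right: the clean bound $\sqrt{n}$ requires a hypothesis, and the one the paper is tacitly assuming is $(a,n)=1$. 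There is no gap in the paper's applications, only a missing hypothesis in the statement of the corollary.
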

\subsection{Proof of Lemma \ref{Lem:spheres}}

By the Chinese Remainder Theorem, it is enough to show Lemma \ref{Lem:spheres} holds for any prime power $q = p^{\ell}$.  Assuming as much, we let $\chi(x) = e^{2 \pi i x / q}$ and $j$ a fixed unit in $\mathbb{Z}_q$.  Then,
\begin{align*}
|S_j| = \sum_{x \in \mathbb{Z}_q^d} S_j(x) &= q^{-1} \sum_{s \in \mathbb{Z}_q} \sum_{x \in \mathbb{Z}_q^d} \chi(sx_1^2) \dots \chi(s x_d^2) \chi(-sj)
\\
&= q^{-1} \left( T_{\infty} + T_0 + \dots + T_{\ell - 1}\right),
\end{align*}
where 
\begin{align*}
T_i &= \mathop{\sum_{s \in \mathbb{Z}_q}}_{val_p(s) = i} \sum_{x \in \mathbb{Z}_q^d} \chi(s x_1^2) \dots \chi(s x_d^2) \chi(-sj)
\\
&= \mathop{\sum_{s \in \mathbb{Z}_q}}_{val_p(s) = i} \left(  \sum_{x \in \mathbb{Z}_q} \chi(s x^2) \right)^d \chi(-sj).
\\
&= \mathop{\sum_{s \in \mathbb{Z}_q}}_{val_p(s) = i} (G(s,q))^d \chi(-sj)
\end{align*}
It is clear that $T_{\infty} = q^d = p^{\ell d}$.  For $i = 0 , \dots , \ell - 1$, note that if $val_p(s) = i$, then $s$ can be written in the form $s = p^i s'$, where $s'$ is a uniquely determined unit mod $\mathbb{Z}_{p^{\ell - i}}^{\times}$.  Using this fact, along with Proposition \ref{Prop:quadgausssum}, we see that
\begin{align*}
T_i &= p^{id} \sum_{s \in \mathbb{Z}^{\times}_{p^{\ell - i}}} (G(s,p^{\ell - i}))^d \chi(- s j)
\\
&= p^{id} \varepsilon_{p^{\ell - i}}^d \left(p^{\ell - i} \right)^{\frac{d}{2}} \sum_{s \in \mathbb{Z}^{\times}_{p^{\ell - i}}}  \eta\left(s \right)^{d(\ell - i)} \chi(- sj)
\end{align*}
where $\eta(s) = \left( \frac{s}{p} \right)$ is the Legendre symbol.  If $d(\ell - i)$ is even, we see that
\begin{align*}
T_i &= p^{\ell \frac{d}{2} + i \frac{d}{2}} \varepsilon_{p^{\ell - i}}^d \sum_{s \in \mathbb{Z}^{\times}_{p^{\ell - i}}} \chi(- s j) 
\\
&= - p^{\ell \frac{d}{2} + i \frac{d}{2}} \varepsilon_{p^{\ell - i}}^d \sum_{s \in p\mathbb{Z}_{p^{\ell - i}}} \chi(- s j),
\end{align*}

and hence $|T_i| \leq p^{(\ell + i)\frac{d}{2}}(p^{\ell - i -1}) = p^{\ell \left( \frac{d+2}{2} \right) + i\left( \frac{d-2}{2} \right) - 1}$.  If $d(\ell - i)$ is odd, then,
\begin{align*}
T_i &= p^{(\ell + i)\frac{d}{2}} \varepsilon_{p^{\ell - i}}^d \sum_{s \in \mathbb{Z}^{\times}_{p^{\ell - i}}} \eta(s) \chi(-sj)
\\
&= p^{(\ell + i)\frac{d}{2}} \varepsilon_{p^{\ell - i}}^d  \underbrace{\left( \sum_{s \in \mathbb{Z}_{p^{\ell - i}}} \eta(s) \chi(-sj)\right.}_{\tau(\eta, \chi_{-s})} - \underbrace{\left. \sum_{s \in p\mathbb{Z}_{p^{\ell - i}}} \eta(s) \chi(-sj)\right)}_{R}.
\end{align*}
By Corollary \ref{Cor:gengausssum}, $|\tau(\eta, \chi_{-s})| \leq \sqrt{p^{\ell - i}}$.  Using a crude bound for $|R|$, we see that
\begin{align*}
|T_i| &\leq p^{(\ell + i)\frac{d}{2}} \left(p^{(\ell - i)\frac{1}{2}} + p^{\ell - i - 1}\right).
\end{align*}
Noting that $\frac{ \ell - i}{2} \leq \ell - i - 1$ for $i \leq \ell - 2$, we have shown that $|T_{i}| \leq 2 p^{\ell \frac{d+2}{2} + i \frac{d-2}{2} - 1}$ when $i = 0, \dots , \ell - 2$, and $|T_{\ell - 1}| \leq 2 p^{\ell d - \frac{d-1}{2}}$.  Altogether, our estimates show:
\[
|T_i| \leq |T_{\ell - 1}| \leq \left\{
\begin{array}{lcl}
p^{\ell d  - \frac{d}{2}}  &   & d(\ell - 1) \text{ is even}  \\
2p^{\ell d - \frac{d-1}{2}}  &   &  d(\ell - 1) \text{ is odd} 
\end{array}
\right.
\]
Thus, we have $|S_j| = q^{d-1} + q^{-1} (T_0 + \dots + T_{\ell-1})$, where
\begin{align}
|T_0 + \dots + T_{\ell - 1}| &\leq \sum_{i=0}^{\ell - 1} |T_i| \leq 
\left\{ 
\begin{array}{lcl}
\ell p^{\ell d  - \frac{d}{2}}  &   & d(\ell - 1) \text{ is even}  \\
2\ell p^{\ell d - \frac{d-1}{2}}  &   &  d(\ell - 1) \text{ is odd} 
\end{array}
\right.
\end{align}
Putting everything together, and recalling that we set $q = p^{\ell}$, we have that
\begin{align}
|S_j| &=p^{\ell (d-1)} + O\left( q^{-1}\sum_{i=0}^{\ell - 1}|T_i| \right) 
\\
&= q^{d-1} + O\left( \left\{
\begin{array}{lcc}
\ell p^{\ell (d-1) - \frac{d}{2}} & & d(\ell - 1) \text { is even} \\
\ell p^{\ell (d-1) - \frac{d-1}{2}} & & d(\ell - 1) \text{ is odd}
\end{array} \right\}
 \right) \nonumber
\\
&= p^{\ell(d-1)} (1 + o(1)).
\end{align}
The general case follows from the Chinese Remainder Theorem.  Recall that if $q = p_1^{\ell_1} \dots p_k^{\ell_k}$, then 
\[
\mathbb{Z}_q \cong \mathbb{Z}_{p_1^{\ell_1}} \times \dots \times \mathbb{Z}_{p_k^{\ell_k}}
\]
\[
\mathbb{Z}_q^{\times} \cong \mathbb{Z}_{p_1^{\ell_1}}^{\times} \times \dots \times  \mathbb{Z}_{p_k^{\ell_k}}^{\times}
\]
Write $t \in \mathbb{Z}_q^{\times}$ as $t = (t_1, \dots t_k)$, where $t_i \in \mathbb{Z}^{\times}_{p_i^{\ell_i}}$.  To find the solutions to $\| x \| = t$ in $\mathbb{Z}_q$, one must solve the equation $\| x \| = t_i $ in each component $\mathbb{Z}_{p_i^{\ell_i}}$.  The number of solutions in $\mathbb{Z}_q$ is then the product of the number of solutions in $\mathbb{Z}_{p_i}^{\ell_i}$.  Hence:
\[
|S_t| = \prod_{i = 1}^k |S_{t_i}| = q^{d-1} (1 + o(1)).
\]

\subsection{Proof of Lemma \ref{Lem:spheredecay}}

Recall that here we require $q = p^{\ell}$.  For $m \neq \vec{0}$, we have
\begin{align*}
\widehat{S}_j(m) &= q^{-d} \sum_{x \in \mathbb{Z}_q^d} S_j(x) \chi(- x \cdot m)
\\
&= q^{-d-1} \sum_{x \in \mathbb{Z}_q^d} \sum_{t \in \mathbb{Z}_q \setminus \{ 0 \}}  \chi((x_1^2 + \dots + x_d^2 - j)t) \chi(- m \cdot x)
\\
&= q^{-d-1} \sum_{t \neq 0 } \chi(-jt) \prod_{i = 1}^{d} \left( \sum_{x_i \in\mathbb{Z}_q} \chi(x_i^2 t - m_i x_i) \right)
\\
&= q^{-d-1} \sum_{t \neq 0} \chi(- j t) \prod_{i = 1}^{d} G(t, - m_i, q)
\end{align*}
By Proposition \ref{Prop:quadgausssum}, $G(t, - m_i, q) = 0$, unless $m_i \equiv 0\pmod{\gcd(t,q)}$.  Note also that $val_p(t) = \nu$ implies that $\gcd(t, q) = p^{\nu}$.  Now,

\begin{align*}
\left. \widehat{S}_j^{\nu}(m) :=  \widehat{S}_j(m) \right|_{val_p(t) = \nu}&= q^{-d-1} \sum_{val_p(t)=\nu} \chi(-jt) p^{\nu d} \prod_{i = 1}^d G\left( \frac{t}{p^{\nu}}, \frac{- m_i}{p^{\nu}}, p^{\ell - \nu} \right).
\end{align*}
For convenience, we put $u= t/p^{\nu}$ and $\mu_i = m_i/p^{\nu}$.  We note that $u$ is a unit and $\mu_i$ are integers, since $m_i \equiv 0 \pmod{p^{\nu}}$ (as otherwise the quadratic Gauss sum vanishes).  Again, we write $\mu=(\mu_1,\dots,\mu_d)$ and 
$\| \mu \| = \displaystyle\sum_{i=1}^d \frac{m_i^2}{ p^{2\nu}}$, and $\left( \frac{\cdot}{p^m} \right)$ denotes the Jacobi symbol.  Hence,
\begin{align*}
\widehat{S}_j^{\nu}(m) &= q^{-d-1} \sum_{val_p(t) = \nu} \chi( - ju) p^{\nu d} \cdot \left(\sqrt{p^{\ell - \nu}}\right)^{d} \varepsilon^d_{p^{\ell - \nu}} \cdot \chi\left( - \frac{\| \mu \|}{4 u}\right) \cdot \left( \frac{u}{p^{\ell - \nu}}\right)^d.
\end{align*}
Proposition \ref{Prop:compsquare} yields that
\begin{align*}
\widehat{S}_j(m) &= q^{-d-1} \sum_{\nu = 0}^{\ell-1} \sum_{u \in \mathbb{Z}^{\times}_{p^{\ell - \nu}}} \chi(-ju) p^{\frac{d}{2}(\ell + \nu)} \cdot \varepsilon_{p^{\ell - \nu}}^{d} \chi\left(-\frac{\| \mu \|}{4 u} \right) \left( \frac{u}{p}\right)^{(\ell - \nu)d}
\\
&= q^{-d-1} \sum_{\nu = 0}^{\ell - 1} p^{\frac{d}{2}( \ell + \nu)} \varepsilon_{p^{\ell - \nu}}^d \sum_{u \in \mathbb{Z}^{\times}_{p^{\ell - \nu}}}\chi\left(- \frac{\| \mu \|}{4 u} - j u \right) \left( \frac{u}{p} \right)^{( \ell - \nu)d}.
\end{align*}
To finish the argument, we claim that we have the bound

\begin{equation}\label{claim}
\left| \sum_{u \in \mathbb{Z}^{\times}_{p^{\beta}}} \chi\left( a u^{-1} +b u \right) \left( \frac{u}{p} \right)^{\beta d} \right| \leq (\beta + 1) p^{\frac{\beta}{2}},
\end{equation}
where $a \in \mathbb{Z}_{p^{\beta}}$ is arbitrary, $b \in \mathbb{Z}_{p^{\beta}}$ is a unit, and $\beta$ is a positive integer.  Accepting the claim for the moment, we see that

\begin{align*}
\left| \widehat{S}_j(m) \right| &\leq q^{-d-1} \sum_{\nu = 0}^{\ell - 1} ( \ell - \nu + 1) p^{\frac{d}{2}(\ell + \nu)} p^{\frac{\ell - \nu}{2}}
\\
&\leq (\ell + 1) q^{-d-1} p^{ \frac{(d+1)\ell}{2}}\sum_{\nu = 0}^{\ell - 1} p^{\left( \frac{d-1}{2} \right) \nu}
\\
&\leq (\ell + 1) q^{-d-1} q^{\frac{(d+1)\ell}{2\ell}} \ell q^{\frac{1}{\ell} \left(\left( \frac{d-1}{2} \right) ( \ell - 1)\right)}
\\
&\leq \ell ( \ell + 1) q^{- \frac{ d + 2\ell - 1}{2 \ell}}
\end{align*}
from which the result follows.  It remains to justify \eqref{claim}.  For convenience, we define the Sali\'e sum (or twisted Kloosterman sum) as
\[
S(a,b,q) = \sum_{x \in \mathbb{Z}_q^{\times}} \chi(ax^{-1} + bx) \left( \frac{x}{q} \right),
\]
and we define the Kloosterman sum as
\[
K(a,b,q) = \sum_{x \in \mathbb{Z}_q^{\times}} \chi(ax^{-1} + bx).
\]
H. Sali\'e (\cite{Salie32}) gave the bound $|S(a,b,p)| \leq 2 \sqrt{p}$ for $\gcd(a,b,p) = 1$ and $p$ an odd prime.  Note that when $q = p^{\beta}$ and $\beta$ is even, we have $S(a,b,q) = K(a,b,q)$.  A. Weil (\cite{Weil48}) provided the well known bound $|K(a,b,q)| \leq f(q) \gcd(a,b,q)^{1/2} q^{1/2}$, where $f(q)$ denotes the number of divisors of $q$.  If $\beta d$ is even, then the sum in \eqref{claim} is reduced to the Kloosterman sum $K(a,b,q)$ which has size $|K(a,b,q)| \leq (\beta + 1)p^{\beta/2}$, since $\gcd(a,b,q) = 1$ because $b$ is a unit$\pmod{q}$.  Henceforth, we may assume that $\beta$ and $d$ are odd.  We now aim to bound
\[
\sum_{x \in \mathbb{Z}_{p^{\beta}}} \chi(ax^{-1} + bx) \left( \frac{x}{p} \right)
\]
where $\beta$ is odd.  We will make use of the following result.

\begin{lemma}[\cite{IK04}]\label{Lem:IK}
Let $\chi(x) = \exp(2 \pi i x /q)$ and let $\left( \frac{\cdot}{q} \right)$ denote the Jacobi symbol.  If $\gcd(2b,q) = 1$, then
\[
S(a,b;q) = \varepsilon_q q^{\frac{1}{2}} \left( \frac{b}{q} \right) \sum_{v^2 \equiv ab\pmod{q}} \chi(2v).
\]
If $\gcd(2ab,q) = 1$, then $S(a,b,q) = 0$ unless $ab$ is a quadratic residue$\pmod{q}$.
\end{lemma}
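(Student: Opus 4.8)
The plan is to prove Lemma~\ref{Lem:IK} by evaluating the Sali\'e sum $S(a,b,q)$ explicitly, first normalizing it and then reducing it to a single ``diagonal'' sum that can be attacked by completing the square and feeding into the Gauss sum formula of Proposition~\ref{Prop:quadgausssum}. Throughout I write $\eta$ for the quadratic character $\left(\frac{\cdot}{q}\right)$ and $\chi$ for the fixed additive character. By the Chinese Remainder Theorem together with the standard twisted multiplicativity of Sali\'e sums, it suffices to treat $q = p^{\beta}$; I would carry out the computation first for $q = p$ and then indicate the modifications for higher prime powers.

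The first step is normalization. Since $b$ is a unit, the change of variables $x \mapsto b^{-1}x$ gives $S(a,b,q) = \eta(b)\,S(ab,1,q)$, which already accounts for the factor $\left(\frac{b}{q}\right)$; it thus remains to evaluate $S(c,1,q) = \sum_{x}\eta(x)\chi(cx^{-1}+x)$ with $c=ab$. If $c$ is a nonresidue I would argue vanishing by grouping terms according to the value $w = x + cx^{-1}$: the contributing $x$ are the roots of $x^2 - wx + c$, whose product is $c$, so the two roots carry opposite quadratic character and cancel, while a repeated root would force $c=(w/2)^2$ to be a square. This simultaneously settles the second assertion of the Lemma. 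If $c = v^2$ is a residue, the substitution $x \mapsto vx$ reduces everything to the diagonal sum
\[
D(v) := \sum_{x \in \mathbb{Z}_q^{\times}} \eta(x)\,\chi\bigl(v(x + x^{-1})\bigr), \qquad S(c,1,q) = \eta(v)\,D(v).
\]

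The heart of the matter is the evaluation of $D(v)$, and I expect the delicate cancellation here to be the main obstacle. The idea is to split $D(v)$ according to whether $x$ is a square. On the residues, writing $x = s^2$ gives $x + x^{-1} = (s - s^{-1})^2 + 2$, so the residue part is $\tfrac12\chi(2v)\sum_{s}\chi\bigl(v(s-s^{-1})^2\bigr)$; on the nonresidues, fixing a nonsquare $n$ and writing $x = ns^2$, the identity $ns^2 + n^{-1}s^{-2} = n(s - n^{-1}s^{-1})^2 + 2$ plays the same role. In each piece I would pass from $s$ to $t = s - cs^{-1}$ using the solution count $\#\{s : s - cs^{-1} = t\} = 1 + \eta(t^2 + 4c)$, which turns each piece into $\sum_t\bigl(1+\eta(t^2+4c)\bigr)\chi(v\cdot(\text{quadratic}))$. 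The untwisted Gauss sum terms $\sum_t \chi(v t^2)$ and $\sum_t \chi(v n t^2)$ are evaluated by Proposition~\ref{Prop:quadgausssum}, differ by the sign $\eta(n)=-1$, and contribute $2\varepsilon_q \eta(v)\sqrt{q}$; the twisted terms $\sum_t \eta(t^2+4c)\chi(\cdots)$, re-expanded over $m$ via $\#\{t : t^2 = m\} = 1+\eta(m)$, combine so that the $\eta(m)$-parts cancel between the two pieces and leave $2\sum_m \eta(m+4)\chi(vm)$, a shifted Gauss sum equal to $2\varepsilon_q\eta(v)\sqrt{q}\,\chi(-4v)$. Assembling gives
\[
D(v) = \varepsilon_q\,\eta(v)\sqrt{q}\,\bigl(\chi(2v) + \chi(-2v)\bigr),
\]
whence $S(a,b,q) = \eta(b)\eta(v)D(v) = \varepsilon_q\left(\frac{b}{q}\right)\sqrt{q}\sum_{v^2\equiv ab}\chi(2v)$, as claimed.

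Finally I would handle the passage from $p$ to $q = p^{\beta}$. The normalization and the completing-the-square identities are insensitive to $\beta$, and the Gauss sums are supplied in full generality by Proposition~\ref{Prop:quadgausssum}; when $\beta$ is even the character $\eta$ is trivial on units, $S$ degenerates into the Kloosterman sum $K(a,b,q)$ and the statement is consistent with Weil's bound. The one genuinely delicate point is the solution count $\#\{s \in \mathbb{Z}_{p^{\beta}}^{\times} : s - cs^{-1} = t\}$: the clean value $1 + \eta(t^2+4c)$ should be justified by Hensel's lemma when the discriminant $t^2+4c$ is a unit and verified separately in the degenerate case where it is a nonunit. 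This book-keeping, rather than any new idea, is the technical cost of working over $\mathbb{Z}_{p^{\beta}}$ in place of $\mathbb{F}_p$.
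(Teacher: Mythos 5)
First, a point of reference: the paper never proves Lemma~\ref{Lem:IK} at all --- it is quoted from Iwaniec--Kowalski \cite{IK04}, where it is established by twisted multiplicativity (reducing to $q=p^{\beta}$), Sali\'e's classical evaluation for $\beta=1$, and $p$-adic stationary phase for $\beta\ge2$. Your argument for prime modulus $q=p$ is correct, and is essentially Sali\'e's elementary evaluation: the normalization $S(a,b,q)=\eta(b)S(ab,1,q)$, the pairing $x\leftrightarrow c/x$ (with $\eta(x)\eta(c/x)=\eta(c)=-1$) for nonresidues $c$, the square/nonsquare splitting with the completing-the-square identities, and the exact counts $\#\{s:s-\kappa s^{-1}=t\}=1+\eta(t^{2}+4\kappa)$ and $\#\{t:t^{2}=m\}=1+\eta(m)$ all check out over $\mathbb{F}_p$, and the assembly of Gauss sums does give $D(v)=\varepsilon_q\,\eta(v)\sqrt{q}\,(\chi(2v)+\chi(-2v))$.

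The genuine gap is the prime-power case, which you dismiss as book-keeping but which is the entire content needed by this paper (Lemma~\ref{Lem:IK} is invoked with $q=p^{\beta}$, $\beta=\ell-\nu$, so composite moduli arise for every $\ell\ge3$). Concretely, your final step fails for every $\beta\ge2$: the character $\eta=\left(\frac{\cdot}{p^{\beta}}\right)$ depends only on the argument mod $p$, hence is imprimitive, and writing $m=u+pk$ and summing over $k$ shows that $\sum_{m\in\mathbb{Z}_{p^{\beta}}}\eta(m+4)\chi(vm)=0$ for $v$ a unit --- this is also forced by the paper's own proposition $\tau(\psi)=\mu(n/n^{*})\psi^{*}(n/n^{*})\tau(\psi^{*})$. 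So the ``shifted Gauss sum'' you claim equals $2\varepsilon_q\eta(v)\sqrt{q}\,\chi(-4v)$ is actually zero, and the main term $\chi(-2v)$ must instead emerge from exactly the degenerate contributions you deferred: over $\mathbb{Z}_{p^{\beta}}$ one has $\#\{t:t^{2}=0\}=p^{\lfloor\beta/2\rfloor}$ rather than $1$, and the count $\#\{s:s-\kappa s^{-1}=t\}$ is no longer $1+\eta(t^{2}+4\kappa)$ when the discriminant is a nonunit. These degenerate terms are the $p$-adic stationary-phase localization at $x\equiv\pm v$, i.e., the heart of the prime-power proof, not a technical cost on top of it. Two further holes: for $\beta$ even the Jacobi symbol is trivial, so your nonresidue pairing (the paired roots then carry \emph{equal} character values) and the sign in your square/nonsquare splitting both collapse, and ``consistent with Weil's bound'' is not a proof of the asserted exact identity $K(a,b;p^{\beta})=p^{\beta/2}\sum_{v^{2}\equiv ab}\chi(2v)$, nor of the vanishing assertion, for even $\beta$; and the first statement of the lemma permits $p\mid a$, where your substitution $x\mapsto vx$ (requiring a unit $v$ with $v^{2}=ab$) is unavailable, so that case needs a separate argument (the paper itself, tellingly, handles $p\mid ab$ by a direct bijection in Section 4 rather than through the lemma). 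In short: a correct and self-contained proof for $\mathbb{F}_p$, but the cases the ring $\mathbb{Z}_{p^{\ell}}$ actually demands remain unproven as written.
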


We will apply the result with $a = - \frac{\| \mu \|}{4}$, $b = u$, and $q = p^{\beta}$.  We consider three cases in the proof of \eqref{claim}.  First, suppose that $ab$ is not a quadratic residue$\pmod{p}$.  Since this forces $\gcd(q,2ab) = 1$, Lemma \ref{Lem:IK} implies that $S(a,b;q) = 0$.  Next, if $ab$ is a nonzero quadratic residue$\pmod{p}$, then the equation $x^2 = ab\pmod{p}$ has exactly two solutions, which by Hensel's Lemma implies that $x^2 = ab\pmod{q}$ also has two solutions.  Therefore,
\[
\left| \sum_{x^2 \equiv ab\pmod{q}} \chi(2x)\right| \leq \sum_{x^2 \equiv ab \pmod{q}} 1 = 2.
\]
Lemma \ref{Lem:IK} then implies that $|S(a,b;q)| \leq 2 \sqrt{q}$.  Finally, we consider the case when $ab \equiv 0\pmod{p}$.  We recall that $b$ is a unit, so the condition $ab \equiv 0\pmod{p}$ immediately implies that $a \equiv 0\pmod{p}$.  Hence, $val_p(b) = 0$, but $val_p(a) > 0$.  Consider the function given by $h(x) = a/x + bx$ which is defined only for units $x \in \mathbb{Z}_q^{\times}$.  A direct calculation shows that $h(x) - h(y) = (x-y)(b - a/xy)$.  Since $b - a/xy$ is nonzero in $\mathbb{Z}_p$, it is a unit in $\mathbb{Z}_q$, and it follows that $val_p(h(x) - h(y)) = val_p(x-y)$, and hence $h : \mathbb{Z}_q^{\times} \to \mathbb{Z}_q^{\times}$ is a bijective map.  Let $g$ denote the inverse map of $h$.  Then, $h(x) = y$ implies
$a/x + bx = y$ or $bx^2 - yx + a = 0$.  Since $a \equiv 0\pmod{p}$, it follows that $x = 0$ and $x=y/b$ are the two solutions to $h(x) = y\pmod{p}$.  As $g(y)=x$ 
is the solution which lies in $\mathbb{Z}_q^{\times}$ it must project to the nonzero solution mod $p$, thus $g(y)=\frac{y}{b} \text{ mod } p$. Hence, $g(y) = y/b + \theta(y)$, where $\theta(y) \equiv 0\pmod{p}$ for all $y \in \mathbb{Z}_q^{\times}$.  Note that we aim to bound the sum
\[
S(a,b,q) = \sum_{x \in \mathbb{Z}_q^{\times}} \chi(h(x)) \left( \frac{x}{p} \right)
\]
where $q = p^{\beta}$ and $\beta$ is odd.  Applying the change of variables $y = h(x)$, and using that $h(x)$ is a bijection, we see that

\[
S(a,b;q) = \sum_{y \in \mathbb{Z}_q^{\times}} \left( \frac{g(y)}{p} \right) \chi(y) = \sum_{y \in \mathbb{Z}_q^{\times}} \left( \frac{y/b + \theta(y)}{p} \right) \chi(y).
\]
As $\left(\frac{z}{p} \right)$ is determined by the value $z\pmod{p}$, and since $\theta(y) = 0\pmod{p}$, it follows that

\[
S(a,b;q) = \sum_{y \in \mathbb{Z}_q^{\times}} \left( \frac{y/b}{p} \right) \chi(y).
\]
Finally, applying the change of variables $x = y/b$, we see that
\[
S(a,b;q) = \sum_{x \in \mathbb{Z}_q^{\times}} \left(\frac{x}{p} \right) \chi(bx) = \tau(\psi, \chi_b),
\]
where $\tau(\psi, \chi_b)$ is the generalized guass sum defined in Definition \ref{Def:Gausssum}.  It follows by Corollary \ref{Cor:gengausssum} that $|S(a,b,q)| = |\tau(\psi,\chi_b)| \leq \sqrt{q}$.  After considering all cases, the claim \eqref{claim} follows.

\newpage

\end{document}